\newtheorem{thm}{Theorem}[section]
\newtheorem{rem}{Remark}[section]
\newtheorem{definition}{Definition}[section]
\newtheorem{lem}{Lemma}[section]
\newtheorem{prop}{Proposition}[section]
\newtheorem{cor}{Corollary}[section]
\numberwithin{equation}{section}
\def\HH{ \EuFrak H}
\def\N{{\rm I\kern-0.16em N}}
\def\R{{\rm I\kern-0.16em R}}
\def \E{{\rm I\kern-0.16em E}}
\def\P{{\rm I\kern-0.16em P}}
\def\F{{\rm I\kern-0.16em F}}
\def\B{{\rm I\kern-0.16em B}}
\def\C{{\rm I\kern-0.46em C}}
\def\G{{\rm I\kern-0.50em G}}
\newcommand{\PP}{\mathscr{P}}
\newcommand{\LG}{\mathrm{L}}
\newcommand{\ud}{\mathrm{d}}
\newcommand{\LL}{\mathfrak{L}}
\newcommand{\RR}{\mathfrak{R}}
\numberwithin{equation}{section}
\font\eka=cmex10
\def\ind{\mathrel{\hbox{\rlap{%
\hbox to 7.5pt{\hrulefill}}\raise6.6pt\hbox{\eka\char'167}}}}
\begin{document}

\title{\Large{\bf On a new Sheffer class of polynomials related to normal product distribution }}

\author[1]{Ehsan Azmoodeh}
\author[2]{Dario Gasbarra}
\affil[1]{Ruhr University Bochum}
\affil[2]{University of Helsinki}
\date{ }                     
\setcounter{Maxaffil}{0}
\renewcommand\Affilfont{\itshape\small}

\maketitle

\abstract  Consider a generic random element $F_\infty= \sum_{\text{finite}} \lambda_k (N^2_k -1)$ in the second Wiener chaos with a finite number of non-zero coefficients in the spectral representation where $(N_k)_{k \ge 1}$ is a sequence of i.i.d $\mathscr{N}(0,1)$. Using the recently discovered (see  Arras  et al. \cite{a-a-p-s-stein}) stein operator $\RR_\infty$ associated to $F_\infty$,  we introduce a new class of polynomials $$\PP_\infty:= \{ P_n = \RR^n_\infty \textbf{1}  \, : \, n \ge 1 \}.$$ We analysis in details the case where $F_\infty$ is distributed as the normal product distribution $N_1 \times N_2$, and relate the associated polynomials class to Rota's {\it Umbral calculus} by showing that it is a \textit{Sheffer family} and enjoys many interesting properties. Lastly, we study the connection between the polynomial class $\PP_\infty$ and the non-central probabilistic limit theorems within the second Wiener chaos. 

 \vskip0.3cm
\noindent {\bf Keywords}:
Second Wiener chaos, Normal product distribution, Cumulants/Moments, Weak convergence, Malliavin Calculus, Sheffer polynomials, Umbral Calculus

\noindent{\bf MSC 2010}: 60F05, 60G50, 46L54, 60H07, 26C10
 \begin{small}
 \tableofcontents
 \end{small}

\section{Introduction}

The motivation of our study comes from the subsequent facts on the Gaussian distribution. Let $N \sim \mathscr{N}(0,1)$ be a standard Gaussian random variable. Consider the following well known first order differential operator related to the so called {\it Ornstein--Uhlenbeck} operator  $$(L f ) (x) =x f(x) - f'(x) = - e^{\frac{x^2}{2}} \frac{d}{dx} \big(  e^{-\frac{x^2}{2}} f \big) $$ acting on a  suitable class $\mathcal{F}$ of test functions $f$. A fundamental result in realm of Stein method in probabilistic approximations, known as stein charactrization of the Gaussian distribution, reads that for a given random variable $F \sim N$  if and only if $\E \left[ (Lf) (F) \right]= \E[ F f (F ) - f'(F) ]=0$ for $f \in \mathcal{F}$ (in fact, the polynomials class is enough). The second notable feature of the operator $L$ in connection with the Gaussian distribution is the following.  Pual Malliavin in his book \cite[page 231]{M-book}, for every  $n \in \N_0$, define the so called {\it Hermite polynomial} $H_n$ of order $n$ using the relation $H_n (x) := L^n \textbf{1} (x)$. For example, the few first Hermite polynomials are given by $H_0 (x)=1, \, H_1(x)=x, \, H_2 (x) = x^2 -1, \, H_3 (x)= x^3 - 3x, H_4(x)= x^4 - 6x +3$. One of the significant properties of the Hermite polynomials is that they constitute an orthogonal polynomials class with respect to the Gaussian measure $ \frac{e^{- x^2/2}}{\sqrt{2 \pi}} \ud x$. The orthogonality character of the Hermite polynomials can be routinely seen as a direct consequence of the adjoint operator $L^\star = \frac{d}{dx}$ that is straightforward computation.\\

Instead the Gaussian distribution (living in the first Wiener chaos) we consider distributions in the second Wiener chaos having a finite number of non-zero coefficients in the spectral representation, namely random variables of the form 
\begin{equation}\label{eq:all-targets}
F_\infty= \sum_{k=1}^{d} \lambda_k (N^2_k -1), \quad d \ge 1,
\end{equation}
 where $(N_k)_{k \ge 1}$ is a sequence of i.i.d $\mathscr{N}(0,1)$. Relevant examples of such those random elements are {\it centered chi--square} and {\it normal product} distributions corresponding to the cases when all $\lambda_k$ are equal, and $d =2$ where two non--zero coefficients $\lambda_1 = - \lambda_2$ respectively. The target distributions of the form $(\ref{eq:all-targets})$ appears often in the classical framework of limit theorems of $U$-statistics, see \cite[Chapter 5.5 Section 5.5.2]{U-statistic}. Noteworthy,  recently Bai \& Taqqu in \cite{bai} showed that the distributions of the form $(\ref{eq:all-targets})$ with $d=2$ can be realized as the limit in distribution of the fractal stochastic process {\it generalized Rosenblatt process} 
 \begin{align*}
  Z_{\gamma_1,\gamma_2} (t) =\int_{\mathbb{R}^2}^{\prime} \bigg(\int_0^t (s-x_1)^{\gamma_1}_+(s-x_2)^{\gamma_2}_+ds\bigg)B(d x_1)B(d x_2)
 \end{align*}
when the exponents $(\gamma_1,\gamma_2)$ approach the boundary of the triangle $\Delta := \big\{ (\gamma_1,\gamma_2)  \, \vert \, -1 < \gamma_1, \gamma_2 < - 1/2 , \, \gamma_1 + \gamma_2 > - 3/2\big\}$. Here, $B$ stands for the Brownian measure, and the prime $\prime$ indicates the off--diagonal integration.  One of their interesting results reads as  $$ Z_{\gamma_1,\gamma_2} (1)  \stackrel{law}{\longrightarrow} N_1 \times N_2, \quad \text{ as } \quad  (\gamma_1,\gamma_2) \to (-1/2,\gamma), -1 < \gamma < - 1/2.$$

 Recently, the authors of $\cite{a-a-p-s-stein}$, using two different approaches, one based on Malliavin Calculus, and the other relying on Fourier techniques, for probability distributions of the form $(\ref{eq:all-targets})$, introduced the following so called stein differential operator of order 
 $d (=\text{the number of the non--zero coefficients})$
 \begin{equation}\label{eq:SME} 
\RR_\infty f (x):= \sum_{l=2}^{d+1} (b_l - a_{l-1} x ) f^{(d+2-l)}(x) - a_{d+1} x f(x),
\end{equation}
where the coefficients $(a_l)_{1 \le l \le d+1}, (b_l)_{2 \le l \le d+1}$ are akin to the random element $F_\infty$ through the relations; 
\begin{align*}
   a_l= \frac{P^{(l)}(0)}{l! 2^{l-1}}, \quad \text{ and } \quad 
   b_l=     \sum_{r=l}^{d+1} \frac{a_r}{(r-l+1)!} \kappa_{r-l+2}(F_\infty),
   \end{align*}
and $ P(x)= x \prod_{i=1}^{d}(x - \lambda_k) $. Here $\kappa_r (F)$ stands for the $r$th cumulant of the random variable $F$. In this paper,  the case $d=2$ of two non--zero coefficients with particular parametrization $\lambda_1 = - \lambda_2 = \frac{1}{2}$ is of our interest. The operator $\RR_\infty$ then admits the form 
\begin{equation}\label{eq:main-operator}
\RR_\infty f (x) = \RR f (x) := x f(x) - f'(x) -x f''(x).
\end{equation}
Note that in this setup the random variable $F_\infty = N_1 \times N_2$ (equality in distribution) is the normal product distribution. The stein operator $(\ref{eq:main-operator})$ associated to the normal product distribution first introduced by Gaunt in \cite{g-2normal}. The normal product distribution also belongs to a wide class of probability distributions known as the {\it Variance--Gamma} class, consult \cite{g-variance-gamma} for further details and development of the Stein characterisations. Following the Gaussian framework, we define the polynomial class 
\begin{equation}\label{eq:main-poly}
\PP := \{ P_n: =\RR^n \textbf{1}  \, : \,  n \in \N_0 \}
\end{equation}
where operator $\RR$ is the same one as in $(\ref{eq:main-operator})$. 
The first fifteenth polynomials $P_n$ are presented in the Appendix Section \ref{sec:appendixa}. In this short note, we study some properties of the polynomials class $\PP$. We derive, among other results, that the class $\PP$ is a {\it Sheffer family} of polynomials, hence possess a rich structure, and can be analyzed within the Gian-Carlo Rota's {\it Umbral Calculus}.  See Section \ref{sec:sheffer} for definitions. We ends the note with connection of the polynomial class $\PP$ to the non--central probabilisitic limit theorems, and show that polynomial $P_6 \in \PP$ plays a crucial role in limit theorems when the target distribution is the favourite  normal product random variable $N_1 \times N_2$.  

\section{Normal product distribution $N_1\times N_2$}

In this section, we briefly collect some properties of normal product distribution. 
\subsection{Modified Bessel functions of the second kind}

The {\it modified Bessel functions} $I_\nu$ and $K_\nu$ with index $\nu$ of the first and the second kinds respectively are defined as two independent solutions of the so called modified Bessel differential equation 
\begin{equation}\label{eq:mbde}
[\RR^{\nu}_{MB}f] (x) := x^2 f''(x) + x f'(x) - (x^2 +\nu^2)f(x)=0
\end{equation}
 with the convention $\RR^{0}_{MB}= \RR_{MB}$. We collect the following results on modified Bessel function of the second kind and the normal product distribution.  
\begin{itemize}
\item[(i)] It is well known that (see e.g. \cite{product-density}) the density function $p_\infty$ of the normal product random variable is given by $$p_\infty(x)=\frac{1}{\pi} K_0(\vert x \vert) \qquad x \in \R$$ where $K_0$ be the modified Bessel function of the second kind with the index $\nu=0$.  

\item[(ii)] The modified Bessel function of the second kind $K_0$ possess several useful representation. Among those, here we state
$$ K_0(\vert x \vert ) = \frac{1}{2} G^{2,0}_{0,2} \left( \frac{x^2}{4} \big \vert 0,0 \right)$$
where $G$ here is the so called \textsc{Meijer $G$-function} that shares many interesting properties, see e.g. \cite{handbook}.

\item[(iii)] $\frac{d}{dx} K_0 (x) = - K_1 (x)$, $K_0 (x) \sim - \log (x)$ as $x \downarrow 0$, and $K_0(x) \sim \sqrt{\frac{\pi}{2x}} e^{-x}$ as $x \to \infty$. 
\item[(iv)] The relation $-x [\RR f] (x)= x^2 f''(x) + x f(x) - x^2 f(x) =  [\RR_{MB}f] (x)$ holds, where $\RR$ as in $(\ref{eq:main-operator})$ is the stein operator associated to the normal product distribution.  
\item[(v)] The characteristic function of the normal product distribution is $\varphi_\infty (t) = (1 + t^2)^{-1}$. Hence, the normal product distribution is the unique random variable in the second Wiener chaos having only two non--zero coefficients equal to $\lambda_1 = - \lambda_2 = 1/2$. 
\item[(vi)] for $n \in \N$,  $$\mu_{2n} (N_1 \times N_2) := \E[ (N_1 \times N_2)^{2n}] = ((2n-1)!!)^2, \quad \kappa_{2n}( N_1 \times N_2 ) = (2n-1)!$$ and $\mu_{2n-1}(N_1 \times N_2)= \kappa_{2n-1}(N_1 \times N_2) =0$.
\end{itemize}

\subsection{The adjoint operator $\RR^\star$}

We recall the following well known finite dimensional Gaussian integration by parts formulae. 
\begin{lem}\label{lem:fdgibp}
Let $N_1,\dots,N_T$ be i.i.d. $\mathscr{N}(0,1)$. For smooth random variables
\begin{align*}
  F( N_1, \dots, N_T),  \quad \text{ and } \quad  \Big( u_t(N_1,\dots, N_T)\Big)_{ t=1,\dots T}
\end{align*}
we have the finite dimensional Gaussian integration by parts formula
\begin{align*}
  \E\bigl( \langle DF, u \rangle\bigr) = \E\bigl( F  \delta(u) \bigr)
\end{align*}
where $ \delta( u) = \sum_{t=1}^T u_t N_t - \sum_{t=1}^T D_t u_t$ is the Skorokhod integral in the finite dimension.
\end{lem}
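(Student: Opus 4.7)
The plan is to reduce the statement to the one-dimensional Gaussian integration by parts (equivalently, Stein's identity) applied coordinate by coordinate. Since both $\langle DF,u\rangle=\sum_{t=1}^T (D_tF)\,u_t$ and $\delta(u)=\sum_{t=1}^T (u_tN_t - D_tu_t)$ are linear in the components of $u$, it suffices to prove, for each fixed $t\in\{1,\dots,T\}$,
\begin{equation*}
\E\bigl[(D_tF)\,u_t\bigr] = \E\bigl[F\,(u_tN_t - D_tu_t)\bigr],
\end{equation*}
and then sum over $t$. This localisation is the clean organisational step that isolates the genuine analytic content into a single coordinate.

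Next, I would condition on $(N_s)_{s\ne t}$ and view $F$ and $u_t$ as smooth functions $g$ and $h$ of the single variable $N_t\sim\mathscr{N}(0,1)$. The classical one-dimensional identity I would invoke is the product rule together with $\E[\phi'(N)]=\E[N\phi(N)]$ (a direct integration by parts against the Gaussian density using $\tfrac{d}{dx}e^{-x^2/2}=-xe^{-x^2/2}$). Applied to $\phi=gh$ this gives
\begin{equation*}
\E\bigl[g'(N)h(N)\bigr] + \E\bigl[g(N)h'(N)\bigr] = \E\bigl[N\,g(N)h(N)\bigr],
\end{equation*}
i.e. $\E[g'(N)h(N)] = \E[g(N)(Nh(N)-h'(N))]$. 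Substituting back $g=F$, $h=u_t$ and taking the outer expectation over $(N_s)_{s\ne t}$ by Fubini yields exactly the per-coordinate identity above, after which summation over $t$ closes the proof.

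The only genuine technical point -- and what I would flag as the main obstacle -- is justifying the vanishing of boundary terms in the one-variable integration by parts, i.e.\ that $g(x)h(x)e^{-x^2/2}\to 0$ as $|x|\to\infty$ and that Fubini applies to the resulting iterated integrals. In the finite-dimensional setting this is automatic under the standing convention that ``smooth'' means $C^\infty$ with at most polynomial growth of the functions and all their partial derivatives (in particular it holds for the polynomials and rational multiples of $F_\infty$-moments used in the rest of the paper). Under that convention the integrals are absolutely convergent, the boundary contributions are zero, and the proof is complete.
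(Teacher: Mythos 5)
Your proof is correct. The paper states this lemma without proof, simply recalling it as a well-known fact, so there is no argument to compare against; your coordinate-by-coordinate reduction to the one-dimensional Stein identity $\E[\phi'(N)]=\E[N\phi(N)]$, followed by conditioning on the remaining coordinates and summing over $t$, is the standard and complete way to establish it, and your remark on polynomial growth correctly disposes of the boundary terms.
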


\begin{prop}\label{prop:adjoint} 
Let $F_\infty = N_1 \times N_2$, and $\mu_\infty= p_\infty dx$ the associated probability measure on the real line. Consider the second order differential operator $$(\RR f)(x) = x f(x) - f'(x) - x f''(x).$$ Then the adjoint operator $\RR^{\star}$ in the space $L^2(\R,\mu_\infty)$ is 
\begin{align*}
(\RR^* g) (x)= (\RR g) (x) + \theta (x) g'(x)- x g (x),    
\end{align*}
where the special function $\theta$ is given by the conditional expectation
\begin{align*}
\theta(x)= \E( N_1^2+ N_2^2 \big \vert  N_1 N_2=x )= 2 \vert x \vert \frac{K_1 (\vert x \vert)}{K_0(\vert x \vert)}.
\end{align*}
\end{prop}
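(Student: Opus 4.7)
The strategy is to reduce the computation to two clean facts: that $\RR$ is \emph{formally self-adjoint} on $L^2(\R,dx)$, and that $p_\infty$ satisfies the Bessel-type ODE recorded in item (iv). Writing $\RR f = xf - (xf')'$ (since $f' + xf'' = (xf')'$) and integrating by parts twice on a convenient test class (polynomials, or $C^\infty_c$, then extended by density), one checks that
$$ \int_\R (\RR f)(x)\, h(x)\, dx \;=\; \int_\R f(x)\, (\RR h)(x)\, dx. $$
Taking $h = g\, p_\infty$ reduces the problem to the compact identity $\RR^\star g = p_\infty^{-1}\,\RR(g p_\infty)$ in $L^2(\mu_\infty)$, after which the remainder is a Leibniz expansion.

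Expanding $\RR(g p_\infty) = xg p_\infty - (g p_\infty)' - x(g p_\infty)''$ and dividing through by $p_\infty$ produces
$$ \RR^\star g \;=\; \RR g \;-\; \frac{p_\infty'}{p_\infty}\bigl(g + 2x g'\bigr) \;-\; \frac{x p_\infty''}{p_\infty}\, g. $$
Now item (iv) gives $\RR_{MB} p_\infty = -x\,\RR p_\infty = 0$, i.e., $x p_\infty'' + p_\infty' = x p_\infty$ on $\R\setminus\{0\}$; thus $\tfrac{x p_\infty''}{p_\infty} = x - \tfrac{p_\infty'}{p_\infty}$. Substituting, the two $\tfrac{p_\infty'}{p_\infty}\, g$ contributions cancel exactly and the expression collapses to
$$\RR^\star g \;=\; \RR g + \theta(x)\, g'(x) - x g(x),\qquad \theta(x) := -\frac{2x\, p_\infty'(x)}{p_\infty(x)}. $$
The analytic closed form for $\theta$ then follows from item (iii): since $K_0' = -K_1$, one has $p_\infty'(x) = -\tfrac{1}{\pi}\mathrm{sgn}(x) K_1(|x|)$, so $\theta(x) = 2|x|\, K_1(|x|)/K_0(|x|)$.

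To identify this $\theta$ with the conditional expectation $\E(N_1^2+N_2^2\mid F_\infty=x)$, I would invoke Lemma \ref{lem:fdgibp} twice, once with $u = (N_2 h(F_\infty),0)$ and once with $u = (0, N_1 h(F_\infty))$, for a smooth test function $h$. Each application yields
$$ \E\bigl[F_\infty h(F_\infty)\bigr] \;=\; \E\bigl[N_2^2\, h'(F_\infty)\bigr] \;=\; \E\bigl[N_1^2\, h'(F_\infty)\bigr], $$
so averaging gives $2\,\E[F_\infty h(F_\infty)] = \E[(N_1^2+N_2^2)\,h'(F_\infty)] = \E[\E(N_1^2+N_2^2\mid F_\infty)\, h'(F_\infty)]$ for every such $h$. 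The analytic $\theta$ satisfies the same identity (integrate $\int 2x h p_\infty\,dx$ by parts and use $(xp_\infty')' = xp_\infty$ one more time), so varying $h$ forces the conditional expectation to coincide $p_\infty$-a.e. with $2|x| K_1(|x|)/K_0(|x|)$.

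The main technical obstacle I anticipate is controlling the boundary terms in the two integrations by parts: at infinity exponential decay of $K_0$ and $K_1$ is more than sufficient, but near the origin $K_0$ blows up logarithmically and $K_1 \sim 1/|x|$, so expressions such as $\bigl[x f'(x)\, g(x)\, p_\infty(x)\bigr]_{x\to 0^\pm}$ are not \emph{a priori} negligible. I would handle this either by restricting the computation to polynomial test functions $f,g$ (making every product manifestly integrable) or, more robustly, by re-routing the entire derivation through the finite-dimensional Gaussian IBP of Lemma \ref{lem:fdgibp}, where the quantities involved are expectations and finite from the outset.
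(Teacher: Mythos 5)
Your argument is correct, but it runs in essentially the opposite direction from the paper's. The paper never touches the density: it computes $\E[\RR f(F_\infty)\,g(F_\infty)]$ entirely through the finite--dimensional Gaussian integration by parts of Lemma \ref{lem:fdgibp} (three successive applications of $\delta$), which lands directly on the conditional--expectation form $\RR^\star g=(\eta)g'-xg''$ with $\eta(x)=\E(N_1^2+N_2^2\mid N_1N_2=x)-1$, and only then evaluates that conditional expectation via the Dirac--delta representation of Lemma \ref{lem:appendixb} and an explicit integral to reach $2|x|K_1(|x|)/K_0(|x|)$. You instead exploit the formal self--adjointness of $\RR=X-\partial X\partial$ on $L^2(\R,dx)$ together with $\RR_{MB}p_\infty=0$, obtaining $\RR^\star g=p_\infty^{-1}\RR(gp_\infty)$ and hence $\theta=-2xp_\infty'/p_\infty$ in closed form at once; the conditional--expectation interpretation then needs a separate (short) Gaussian IBP argument, which is where you reconnect with the paper's machinery. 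Your route is more transparent about \emph{why} the answer is $-2xp_\infty'/p_\infty$ and spares the reader the computation of $\E[(N_1^2+N_2^2)\delta_x(N_1N_2)]$; the paper's route buys freedom from any regularity discussion, since every quantity is an expectation from the start. The one step you flag but do not close is genuinely the delicate one: with $h=gp_\infty$ the function $xh'(x)$ does \emph{not} vanish at the origin (it tends to $-g(0)/\pi$, because $xp_\infty'(x)=-\tfrac{1}{\pi}|x|K_1(|x|)\to-\tfrac1\pi$), so splitting the integral at $0$ produces nonzero one--sided boundary terms $\mp f(0)\,xh'(x)\big|_{0^\pm}$. These cancel precisely because $xp_\infty'$ approaches the \emph{same} limit from both sides, and the resulting integrands are only logarithmically singular (the $gp_\infty'$ term in $h'$ cancels against $-gp_\infty'$ hidden in $xh''$ via $xp_\infty''=xp_\infty-p_\infty'$), so the argument does go through — but this cancellation should be stated, not merely anticipated; restricting to polynomial $f,g$ controls growth at infinity but does not by itself dispose of the origin.
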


\begin{proof}
In order to compute the adjoint of $\RR^\star$, with $u =   \left(
\begin{matrix}
u_1\\ u_2\end{matrix} \right)=\frac 1 2 \left(
\begin{matrix}
N_1\\ N_2\end{matrix} \right)$, and $ v= \frac 1 2 \left(
\begin{matrix}
N_2\\ N_1\end{matrix} \right)$ we write
\begin{equation*} 
\begin{split}
\E & \bigl( f''( N_1 N_2) g(N_1 N_2) N_1 N_2 \bigr) = \E\biggl( \bigl\langle D f'(N_1N_2) ,  g(N_1N_2) u\bigr \rangle\biggr) \\
&= \E\biggl( f'( N_1 N_2) \delta\bigl(  g(N_1 N_2 ) u \bigr ) \biggr) \\
&= \E\biggl( f'(N_1N_2) \bigl(   g(N_1 N_2) 
\frac{( N_1^2 + N_2^2) } 2 - g'(N_1 N_2) N_1 N_2  -g(N_1 N_2)      ) \biggr)
\end{split}
\end{equation*}
where
\begin{equation*} 
\begin{split}
 \E\biggl( &f'(N_1N_2) g'(N_1 N_2)  N_1 N_2 \biggr) \\
 & =  \E\biggl( f(N_1N_2) \biggl(   g'(N_1 N_2) \frac{( N_1^2 + N_2^2) } 2 - g''(N_1 N_2) N_1 N_2 -g'(N_1N_2) \biggr)\biggr)
\end{split}
\end{equation*}
and
\begin{equation*} 
\begin{split}
\E\biggl( f'(N_1N_2)  & \bigl(   g(N_1 N_2) \frac{( N_1^2 + N_2^2) } 2\bigr) \biggr)=\E\biggl( \bigl \langle D f(N_1N_2), g(N_1 N_2) v\bigr\rangle \biggr)\\
&= \E\biggl(  f(N_1N_2) \delta\bigl( g(N_1 N_2) v\bigr)\biggr) \\
&=  \E\biggl( f(N_1 N_2) \biggl( g(N_1N_2) N_1N_2 - g'(N_1N_2) 
\frac{ ( N_1^2+ N_2^2 ) } 2 \biggr) \biggr).
\end{split}
\end{equation*}

Therefore we have
\begin{equation*} 
\begin{split}
  \E\bigl(  & \RR f(N_1 N_2) g(N_1 N_2) \bigr) =    \E\biggl(  f(N_1 N_2 ) g(N_1N_2)N_1 N_2\biggr) - \E\biggl(  f'(N_1 N_2 ) g(N_1N_2)\biggr) \\
  & \quad -\E\biggl(  f''(N_1 N_2 ) g(N_1N_2)N_1 N_2\biggr)\\
& = \E\biggl(  f(N_1 N_2 ) g(N_1N_2)N_1 N_2\biggr)-\E\biggl(  f'(N_1 N_2 ) g(N_1N_2)\biggr)\\
& \quad +  \E\biggl( f(N_1N_2) \biggl(   g'(N_1 N_2) \frac{( N_1^2 + N_2^2) } 2 - g''(N_1 N_2) N_1 N_2 -g'(N_1N_2)\biggr)\biggr) \\
& \quad + \E\bigl( f'(N_1N_2)g(N_1N_2) \bigr)- \E\biggl( f(N_1 N_2) \biggl( g(N_1N_2) N_1N_2 - g'(N_1N_2) 
\frac{ ( N_1^2+ N_2^2 ) } 2 \biggr) \biggr) \\
&=  \E\biggl( f(N_1N_2) \biggl(   g'(N_1 N_2) ( N_1^2 + N_2^2) - g''(N_1 N_2) N_1 N_2 -g'(N_1 N_2) \biggr)\biggr)\\
&= \E\biggl(  f( N_1N_2)  \biggl(   g'(N_1 N_2) \E\bigl( N_1^2 + N_2^2 \big\vert N_1N_2 \bigr) - g''(N_1 N_2) N_1 N_2  -g'(N_1N_2) \biggr)\biggr).
\end{split}
\end{equation*}

This implies that $ \RR^* g(x)=\eta(x)g'(x)-g''(x)x  $ where  the conditional expectation
\begin{align*}
\eta(x)= \E( N_1^2+ N_2^2 \big \vert  N_1 N_2=x ) -1
\end{align*}
is  a special function. In order to compute explicitly the special function $\eta$, we make use of Lemma \ref{lem:appendixb} to write 
\begin{align*} &&
 \eta(x)= \E(  N_1^2+ N_2^2 \big \vert  N_1 N_2=x  )  -1 = \frac{  \E( \delta_x( N_1 N_2) ( N_1^2+ N_2^2 )  ) }{ p_\infty(x) } -1
 \end{align*}
where
\begin{align*}&
 \E\bigl(  (N_1^2+ N_2^2) \delta_x( N_1 N_2) 
\bigr)  = \frac  1 {2\pi} \int_{\R} \biggl( \int_{\R}\bigl (z^2 + y^2 \bigr)\delta_x( zy)  e^{-\frac{z^2} 2} dz \biggr)  e^{-\frac{y^2} 2} dy & \\ &
 = \frac  1 {2\pi} \int_{\R} \biggl( \int_{\R} \biggl(\frac{ u^2}{y^2} + y^2 \biggr)\delta_x(u)  e^{-\frac{u^2} {2 y^2} } y^{-1} du \biggr)
 e^{-\frac{y^2} 2} dy & \\ &
 = \frac  1 {2\pi} \int_{\R} \biggl (\frac{ x^2}{y^2} + y^2 \biggr) e^{-\frac{x^2} {2 y^2} } |y|^{-1}  e^{-\frac{y^2} 2} dy=
 \frac  1 {2\pi} \int_0^{\infty}  \biggl (\frac{ x^2}{v} + v  \biggr ) e^{-\frac{x^2} {2v} -\frac{v} 2}  v^{-1} dv
 & \\ & = \frac{ 2 |x| K_1( |x| ) }{\pi}
&\end{align*}
by changing variables first  with $z=u/y$ and then with $v=y^2$. Therefore $ \eta(x) =  \frac{ 2 |x| K_1( |x| ) - K_0(|x|) }{  K_0(|x|)}$, and the result follows at once taking into account the definition of operator $\RR$.

\end{proof}

\section{The polynomial class $\PP$}
\subsection{Some basic properties of polynomials $P_n$}
Recall that $\PP := \{ P_n: =\RR^n \textbf{1}  \, : \,  n \ge 0 \}$ where the stein operator $\RR$ associated to the favourite random variable $F_\infty$ is given by the second order differential operator $\RR f (x)= x f(x) - f'(x) -x f''(x)$. We start with the following observation on the coefficients of polynomials $P_n$.
\begin{prop}\label{lem:coefficient}
For every $n \ge 1$, the polynomial $P_n \in \PP$ is of degree $n$. Also, assume that 
\begin{equation}\label{eq:poly-n}
 P_{n} (x):=\sum_{k=0}^{n}  \mathfrak{a}(n,k) x^k, \qquad n\ge 1.
 \end{equation} 
Then, for $n \in \N$, the following properties hold.
\begin{enumerate}
\item[(i)] $ \mathfrak{a}(n,n)=1$, i.e. all polynomials $P_n$ are \textit{monic}.
\item[(ii)]  $ \mathfrak{a}(n,n-(2k-1)=0$, for all $ 1 \le k \le [\frac{n+1}{2}]$. In particular, $ \mathfrak{a}(n,n-1)=0$.
\item[(iii)] The doubly indices sequence $\mathfrak{a}(n,k)$ satisfies in the recursive relation 
\begin{equation}\label{eq:recursive}
 \mathfrak{a}(n,k) = \mathfrak{a}(n-1,k-1) - (k+1)^2 \mathfrak{a}(n-1,k+1),
 \end{equation}
  with two terminal conditions $\mathfrak{a}(n,n)=1, \mathfrak{a}(n,n-1)=0$. Moreover, the solution of recursive formula $(\ref{eq:recursive})$, for every $ 0 < k \le \frac{n}{2} (\text{or } \frac{n-1}2)$ depending whether $n$ is even or $n$ is odd, is given by 
  \begin{equation}\label{eq:index-shape}
  \mathfrak{a}(n,n-2k)= (-1)^k \sum_{i_1=1}^{n-2k+1} i^2_1  \sum_{i_2=1}^{i_1+1} i^2_2   \sum_{i_3=1}^{i_2+1} i^2_3 \cdots  \sum_{i_k=1}^{i_{k-1}+1} i^2_k.
  \end{equation}
\item[(iv)] for every $n \in \N$, \begin{equation}
 \mathfrak{a}(n,0) =
\begin{cases}
0, & \text{ if } n \text{ is odd}, \\
(-1)^{n/2} \sum_{i_1=1}^{1} i^2_1  \sum_{i_2=1}^{i_1+1} i^2_2   \sum_{i_3=1}^{i_2+1} i^2_3 \cdots  \sum_{i_{n/2}=1}^{i_{n/2-1}+1} i^2_{n/2}  &,  \text{ if } n \text{ is even}.
\end{cases}
\end{equation}

 \item[(v)] for $n$ even, $$ \mathfrak{a}(n,0)= 4 \sum_{k=1}^{n/2} (-1)^{k-1} \mathfrak{a}(n-2k,2).$$  
 \end{enumerate}
\end{prop}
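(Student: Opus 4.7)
The plan rests on a single direct computation: for any $k\geq 0$,
\[
\RR(x^k) \;=\; x\cdot x^k - (x^k)' - x(x^k)'' \;=\; x^{k+1} - k\,x^{k-1} - k(k-1)x^{k-1} \;=\; x^{k+1} - k^2 x^{k-1},
\]
using that $k+k(k-1)=k^2$. From this identity everything cascades. Writing $P_n = \RR P_{n-1}$ and extracting the coefficient of $x^k$ yields the recurrence \eqref{eq:recursive} directly. The statement that $\deg P_n = n$ and part (i) then follow by a single induction: at $k=n$ the recurrence reads $\mathfrak{a}(n,n) = \mathfrak{a}(n-1,n-1) - (n+1)^2 \mathfrak{a}(n-1,n+1) = 1 - 0$, with base case $P_0 = 1$. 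For (ii), observe that $\RR$ sends $x^k$ into $\mathrm{span}\{x^{k-1}, x^{k+1}\}$, so $\RR$ swaps the even and odd subspaces of polynomials; induction on $n$ then shows $P_n$ has the same parity as $n$, which kills every coefficient $\mathfrak{a}(n, n-(2j-1))$.

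The substantive part is the closed form in (iii), and I would handle it by strong induction on $n$. Setting $b(n,k) := \mathfrak{a}(n, n-2k)$, the recurrence \eqref{eq:recursive} rewrites as $b(n,k) = b(n-1,k) - (n-2k+1)^2\, b(n-1,k-1)$ with boundary value $b(n,0) = 1$ (which matches $(-1)^0$ times an empty product). The induction step then reduces the claim $b(n,k) = (-1)^k S(n,k)$, with $S(n,k)$ denoting the nested sum in \eqref{eq:index-shape}, to the combinatorial identity
\begin{equation*}
S(n,k) \;=\; S(n-1,k) \;+\; (n-2k+1)^2\, S(n-1,k-1).
\end{equation*}
This in turn follows by peeling off the top value $i_1 = n-2k+1$ in the outermost summation of $S(n,k)$: what remains is precisely $S(n-1,k)$, while the peeled contribution equals $(n-2k+1)^2$ times a nested sum of depth $k-1$ whose outer index ranges from $1$ to $(n-2k+1)+1 = n-2(k-1)$, matching $S(n-1,k-1)$ on the nose.

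Items (iv) and (v) are then short consequences. For (iv): when $n$ is odd, $\mathfrak{a}(n,0) = 0$ is immediate from (ii), and when $n$ is even it is the specialization $k = n/2$ of (iii). For (v), I would combine \eqref{eq:recursive} at $k=0$, which gives $\mathfrak{a}(n,0) = -\mathfrak{a}(n-1,1)$, with the same recurrence at $k=1$, which gives $\mathfrak{a}(n-1,1) = \mathfrak{a}(n-2,0) - 4\,\mathfrak{a}(n-2,2)$; this yields the two-step recurrence $\mathfrak{a}(n,0) + \mathfrak{a}(n-2,0) = 4\,\mathfrak{a}(n-2,2)$, and telescoping with alternating signs down to the base case produces the claimed alternating-sign sum. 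The main obstacle in the whole proposition is the bookkeeping for the closed form in (iii); once the peeling identity for $S(n,k)$ is in hand, every other item follows from either a one-line induction or a parity observation.
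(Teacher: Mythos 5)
Your treatment of items (i)--(iv) is correct and follows essentially the same route as the paper: the identity $\RR(x^k)=x^{k+1}-k^2x^{k-1}$ yields the recurrence \eqref{eq:recursive}, the parity of $\RR$ gives (ii) (the paper does this by a coefficient-level induction, which amounts to the same thing), and your ``peeling'' identity $S(n,k)=S(n-1,k)+(n-2k+1)^2\,S(n-1,k-1)$ is exactly the manipulation of nested sums carried out in the paper's proof of (iii); your substitution $b(n,k)=\mathfrak{a}(n,n-2k)$ only makes the bookkeeping more transparent. The edge case $k=n/2$ in the induction (where $b(n-1,k)=\mathfrak{a}(n-1,-1)=0$ matches the empty sum $S(n-1,k)=0$) is glossed over by both you and the paper, but it is consistent with the conventions.

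Item (v) is where your argument has a genuine gap, and the gap is informative. Telescoping $\mathfrak{a}(n,0)=4\,\mathfrak{a}(n-2,2)-\mathfrak{a}(n-2,0)$ down to $n=0$ does not produce the claimed sum; it produces
\[
\mathfrak{a}(n,0)=4\sum_{k=1}^{n/2}(-1)^{k-1}\mathfrak{a}(n-2k,2)+(-1)^{n/2}\mathfrak{a}(0,0),
\]
and the boundary term $(-1)^{n/2}\mathfrak{a}(0,0)=(-1)^{n/2}$ does not vanish, nor can it be absorbed into the sum, since the $k=n/2$ summand is $4(-1)^{n/2-1}\mathfrak{a}(0,2)=0$. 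Checking against the appendix confirms the discrepancy: for $n=4$ one has $\mathfrak{a}(4,0)=5$ while $4\bigl(\mathfrak{a}(2,2)-\mathfrak{a}(0,2)\bigr)=4$; for $n=6$, $\mathfrak{a}(6,0)=-61$ while $4\bigl(\mathfrak{a}(4,2)-\mathfrak{a}(2,2)+\mathfrak{a}(0,2)\bigr)=-60$. So the identity in (v) as stated is itself off by $(-1)^{n/2}$, and the corrected statement is the displayed formula above. The paper's proof of (v) is only the one-line remark that it ``can be obtained directly from the recursive relation,'' so the missing boundary term went unrecorded there as well; your derivation is the right mechanism, but you must carry the term $(-1)^{n/2}$ through the telescoping and either correct the statement or flag the discrepancy, rather than asserting that the telescoping ``produces the claimed alternating-sign sum.''
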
 
 \begin{proof}
 (i) It is straightforward (for example, by an induction argument on $n$) to see that for every $n \in \N$, we have $\text{deg}(P_n) =n$, and moreover $ \mathfrak{a}(n,n)=1$.  (ii) We again proceed with an induction argument on $(n,k)$ for $0 \le k \le n$, and $n \in \N$. Obviously, the claim holds for starting values $n=1,2$. Assume that it holds for some $n \in \N$ that $\mathfrak{a}(n,n-(2k-1)=0$, for all $ 1 \le k \le [\frac{n+1}{2}]$. We want to show it also holds for $n+1$. Using the very definition of polynomial $P_{n+1} (x) = \RR P_n (x) = x P_n (x) -  P'_{n}(x) - x P''_{n}(x)$, and doing some simple computation we infer that  $$ \mathfrak{a}(n+1,k) = \mathfrak{a}(n,k-1) - (k+1)^2 \mathfrak{a}(n,k+1)$$ for $0 \le k \le n+1$ with the convention that $\mathfrak{a}(n,k)=0$ for every value $0 > k > n$. Now the claim for $n+1$ easily follows from induction hypothesis. (iii) Using recursive relation  $(\ref{eq:recursive})$ we can infer that 
 \begin{equation*}
 \begin{split}
 \mathfrak{a}(n,n - 2k) &=  \mathfrak{a}(n-1,(n-1) - 2k) -  (n-2k+1)^2 \mathfrak{a}(n-1,(n-1) - 2(k-1))\\
&=  \Bigg\{   (-1)^k \sum_{i_1=1}^{n-2k} i^2_1  \sum_{i_2=1}^{i_1+1} i^2_2   \sum_{i_3=1}^{i_2+1} i^2_3 \cdots  \sum_{i_k=1}^{i_{k-1}+1} i^2_k  \Bigg\}  \\
 & \qquad -  \Bigg\{  (-1)^{k-1} (n-2k+1)^2 \sum_{i_1=1}^{n-2k+2} i^2_1  \sum_{i_2=1}^{i_1+1} i^2_2   \sum_{i_3=1}^{i_2+1} i^2_3 \cdots  \sum_{i_{k-1}=1}^{i_{k-2}+1} i^2_{k-1}  \Bigg\} \\
 & = (-1)^k \Bigg\{ \Big\{ \sum_{i_1=1}^{n-2k} i^2_1  \sum_{i_2=1}^{i_1+1} i^2_2   \sum_{i_3=1}^{i_2+1} i^2_3 \cdots  \sum_{i_k=1}^{i_{k-1}+1} i^2_k \Big\} \\
 & \qquad + \Big\{ (n-2k+1)^2 \sum_{i_1=1}^{n-2k+2} i^2_1  \sum_{i_2=1}^{i_1+1} i^2_2   \sum_{i_3=1}^{i_2+1} i^2_3 \cdots  \sum_{i_{k-1}=1}^{i_{k-2}+1} i^2_{k-1} \Big\}  \Bigg\} \\
 & = (-1)^k \sum_{i_1=1}^{n-2k+1} i^2_1  \sum_{i_2=1}^{i_1+1} i^2_2   \sum_{i_3=1}^{i_2+1} i^2_3 \cdots  \sum_{i_k=1}^{i_{k-1}+1} i^2_k.
 \end{split}
 \end{equation*}
 (iv) This part is a direct consequence of item (iii), and finally (v) can be obtained directly from the recursive relation $(\ref{eq:recursive})$. 
 \end{proof}

\begin{lem}\label{lem:properties}
  Let $m,n\in \N$. For polynomials $P_n \in \PP$, the following properties hold.
\begin{enumerate}
\item[(i)] $(\RR P'_{n-1}) (x) = P''_{n-1}(x) - P_{n-1}(x) + P'_{n}(x)$.
\item[(ii)] $\RR (P_n P_m)= P_n \RR P_m + P_m \RR P_n -                                                                                                                                                                                                                                                                                                                                                                                                                                                                                                                                                                                                                                                                                                                                                                                                                                                                                                                                                                                                                                                                                                                                                           \left(  2x P'_n P'_m + x P_n P_m \right)$. In particular, $$\RR (xP_n)=x P_{n+1}- P_n -2 x P'_n.$$
 \item[(iii)]$\E[P_n (F_\infty)] = 0$.
 \item[(iv)] for every $n \in \N$, it holds that  $\E[F_\infty P_{n+1} (F_\infty)] = 2 \E[ F_\infty P'_n (F_\infty)]$. In particular, when $n$ is odd both sides vanish. 
 \item[(v)] let $n$ be even, and $m$ odd, or vice versa. Then $\E[P_n(F_\infty)P_m(F_\infty)]=0$. In particular, $$\E[P_n(F_\infty) (\RR P_n)(F_\infty)]= \E[P_n(F_\infty) P_{n+1}(F_\infty)]=0.$$ This item provides some sort of "weak orthogonality".
\end{enumerate}
\end{lem}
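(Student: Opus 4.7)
The plan is to dispose of items (i) and (ii) by direct differentiation, and to derive (iii)--(v) from two inputs: the Stein identity $\E[(\RR g)(F_\infty)]=0$, valid for every polynomial test function $g$ because $F_\infty = N_1\times N_2$ has moments of all orders, and the parity structure of the polynomials $P_n$.

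For (i), I would differentiate the defining identity $P_n(x) = xP_{n-1}(x) - P'_{n-1}(x) - xP''_{n-1}(x)$ once to obtain
\[
P'_n(x) = P_{n-1}(x) + xP'_{n-1}(x) - 2P''_{n-1}(x) - xP'''_{n-1}(x),
\]
and recognise the last three terms as $\RR P'_{n-1}(x) - P''_{n-1}(x)$; rearranging yields (i). For (ii), I would write $\RR(P_nP_m) = xP_nP_m - (P_nP_m)' - x(P_nP_m)''$, expand with the Leibniz rule, and check that the sum $P_n\RR P_m + P_m\RR P_n$ accounts for every resulting term except $xP_nP_m$ and the mixed second-derivative contribution $2xP'_nP'_m$, which together make up the stated correction. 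Specialising to $m=1$, using $P_1 = \RR\textbf{1} = x$ and $\RR P_1 = P_2$, gives the quoted formula for $\RR(xP_n)$.

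For (iii), applying the Stein identity to $g = P_{n-1}$ immediately produces $\E[P_n(F_\infty)] = \E[(\RR P_{n-1})(F_\infty)] = 0$. For (iv), I would take expectations in the special case $\RR(xP_n) = xP_{n+1} - P_n - 2xP'_n$ from (ii): applying the Stein identity to the polynomial $g(x) = xP_n(x)$ makes the left side vanish, and eliminating $\E[P_n(F_\infty)]$ via (iii) yields $\E[F_\infty P_{n+1}(F_\infty)] = 2\E[F_\infty P'_n(F_\infty)]$. For the "in particular" clause I would observe that $\RR$ flips parity --- each of the three summands $xf$, $f'$, $xf''$ reverses the parity of $f$ --- so by induction from $P_0 = \textbf{1}$ each $P_n$ has the parity of $n$; when $n$ is odd, both $xP'_n(x)$ and $xP_{n+1}(x)$ are odd functions, and the symmetry of the law of $F_\infty = N_1N_2$ forces both expectations to vanish.

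Finally, (v) is an immediate consequence of the same parity observation together with the symmetry of $F_\infty$: when $n$ and $m$ have opposite parities, the polynomial $P_nP_m$ is odd, hence $\E[P_n(F_\infty)P_m(F_\infty)] = 0$, and specialising to $m = n+1$ gives the stated weak orthogonality. No step presents a genuine obstacle; the only mild care point is verifying that the Stein identity applies to the unbounded polynomial test function $g(x) = xP_n(x)$ used in (iv), which is automatic from the existence of all moments of $F_\infty$.
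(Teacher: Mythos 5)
Your proposal is correct and follows essentially the same route as the paper: direct computation for (i)--(ii), the Stein identity $\E[(\RR g)(F_\infty)]=0$ for (iii), the specialisation $\RR(xP_n)=xP_{n+1}-P_n-2xP'_n$ combined with (iii) for (iv), and the parity of $P_n$ together with the symmetry of $N_1N_2$ for (v). The only difference is cosmetic: the paper additionally re-derives the Stein identity in a self-contained way via the finite-dimensional Gaussian integration-by-parts formula of Lemma \ref{lem:fdgibp}, whereas you invoke it directly, which is legitimate since $F_\infty$ has moments of all orders.
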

\begin{proof}
By very definition of polynomials in the class $\PP$ using raising operator $\RR$ it yields that $P_n = xP_{n-1} - P'_{n-1}-xP''_{n-1}$ for every  $n\ge 1$.  Then items (i), (ii) can be obtained by doing some straightforward computations, using definition of $\RR$ operator, and $P_1 (x)=x$. (iii) It holds that $\E[P_n (F_\infty)] = \E[ \RR(P_{n-1}) (F_\infty)]$, and the later expectation vanishes since the raising operator $\RR$ serves as an associated stein operator for the favourite random variable $F_\infty$. However, to be self-contained, here we present a simple proof base on Lemma \ref{lem:fdgibp}.  Let $u =  (
\begin{matrix}
u_1 \,  u_2\end{matrix} )^t =\frac 1 2 (
\begin{matrix}
N_1 \,  N_2\end{matrix})^t$, so $\delta(u)=(N_1^2 + N_2^2)/2 - 1$. By using the Gaussian integration by parts formula on $L^2( \R^2, \gamma \otimes \gamma)$, where $\gamma$ is the standard Gaussian measure, we write 
\begin{align*} &
 \E\bigl(  f''(N_1 N_2)  N_1 N_2 \bigr) = \E\bigl(  \bigl\langle D f'(N_1N_2), u  \bigr\rangle\bigr)
=  \E\bigl(  f'(N_1 N_2) \delta( u ) \bigr) &\\& = \E\biggl( f'(N_1 N_2) \bigl(  \frac{ N_1^2 + N_2^2 } 2  - 1\bigr) \biggr)
                                                              & \\ &
                                                              =  - \E\bigl( f'(N_1 N_2) \bigr)
      + \E\bigl(  \bigl\langle D f(N_1N_2),v \bigr\rangle \bigr), \quad          v=   \left(
\begin{matrix}
v_1\\ v_2\end{matrix} \right)=\frac 1 2 \left(
\begin{matrix}
N_2\\ N_1\end{matrix} \right)  , & \\ &                                      
                   =      - \E\bigl( f'(N_1 N_2) \bigr) + \E  \bigl( f(N_1 N_2) \delta( v) \bigr)=                                                         
  - \E\bigl( f'(N_1 N_2) \bigr) + \E  \bigl( f(N_1 N_2) N_1N_2 \bigr)   &                                                       
\end{align*}
since $\delta(v)=N_1N_2$ for every function $f$ as soon as the involved expectations exist, in particular the polynomial functions. (iv) It is a direct consequence of Items (ii), (iii), and the fact that $P_n (x) = \RR P_{n-1}(x)$.(v) Note that $\E[P_n(F_\infty)(\RR P_m) (F_\infty)]= \E[ P_n(F_\infty) P_{m+1}(F_\infty)]$, and the later expectation also vanishes relying on item (ii) Lemma \ref{lem:coefficient}, and the fact that the random variable $F_\infty$ is a symmetric distribution yields that all the odd moments  $F_\infty$ vanish. \end{proof}

We ends this section with the non--orthogonality of the polynomials family $\PP$.
\begin{prop} \label{prop:orthogonal} 
The family $\mathscr{P}$ is not a class of orthogonal polynomials.
\end{prop}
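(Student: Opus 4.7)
To disprove orthogonality of the whole class $\mathscr{P}$ it suffices to exhibit a single pair of polynomials $P_n, P_m$ with $n\neq m$ for which $\E\bigl[P_n(F_\infty)P_m(F_\infty)\bigr]\neq 0$. A substantial part of the allowed pairs is already excluded from being useful counterexamples: by Lemma \ref{lem:properties}(v), any pair $(P_n,P_m)$ with $n,m$ of opposite parity is automatically orthogonal, and by Lemma \ref{lem:properties}(iii), $P_0=\mathbf{1}$ is orthogonal to every $P_n$ with $n\geq 1$. The first nontrivial candidates are therefore same-parity pairs with $n,m\geq 2$, and the cheapest one to test is $(P_2,P_4)$.

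The plan is to compute $P_2$ and $P_4$ explicitly from the defining recursion $P_n=\mathfrak{R} P_{n-1}=xP_{n-1}-P'_{n-1}-xP''_{n-1}$, starting from $P_0=1$, $P_1=x$. This quickly gives $P_2(x)=x^2-1$, $P_3(x)=x^3-5x$, and $P_4(x)=x^4-14x^2+5$ (matching the table in the appendix). Then I would expand
\begin{equation*}
P_2(x)P_4(x)=(x^2-1)(x^4-14x^2+5)=x^6-15x^4+19x^2-5
\end{equation*}
and evaluate the expectation using the even-moment formula $\E[F_\infty^{2n}]=((2n-1)!!)^2$ recalled in item (vi) of Section 2.1, which gives $\E[F_\infty^2]=1$, $\E[F_\infty^4]=9$, $\E[F_\infty^6]=225$. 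Hence
\begin{equation*}
\E\bigl[P_2(F_\infty)P_4(F_\infty)\bigr]=225-15\cdot 9+19\cdot 1-5 = 104 \neq 0,
\end{equation*}
so the family $\mathscr{P}$ is not orthogonal with respect to $\mu_\infty$.

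There is no real obstacle here; the only risk is an arithmetic slip in the coefficients of $P_4$ or in the moments of $F_\infty$, both of which can be cross-checked against the appendix and against Section 2.1 respectively. Conceptually, the non-orthogonality is not surprising: if $\mathscr{P}$ were orthogonal, the raising operator $\mathfrak{R}$ would coincide (up to normalization) with its adjoint $\mathfrak{R}^\star$ in $L^2(\R,\mu_\infty)$, whereas Proposition \ref{prop:adjoint} shows that $\mathfrak{R}^\star-\mathfrak{R}$ contains the nontrivial first-order term $\theta(x)\frac{d}{dx}-x$, with the special function $\theta$ not proportional to a polynomial. The pair $(P_2,P_4)$ simply makes this asymmetry visible at the lowest possible degree.
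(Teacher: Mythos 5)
Your computation is internally correct: $P_2(x)P_4(x)=x^6-15x^4+19x^2-5$ and, using $\E[F_\infty^{2n}]=((2n-1)!!)^2$, one gets $\E[P_2(F_\infty)P_4(F_\infty)]=225-135+19-5=104\neq 0$ (the value $94$ quoted in the paper's remark appears to be an arithmetic slip; either way it is nonzero). However, this proves a strictly weaker statement than the Proposition. What you establish is that $\mathscr{P}$ fails to be orthogonal \emph{with respect to the particular measure $\mu_\infty$}. The Proposition, as the paper proves it, asserts that $\mathscr{P}$ is not an orthogonal polynomial sequence with respect to \emph{any} moment functional. The paper's proof goes through Favard's theorem: if $\mathscr{P}$ were orthogonal for some measure, the monic polynomials would satisfy a three-term recurrence $P_{n+1}(x)=(x-c_n)P_n(x)-d_nP_{n-1}(x)$; comparing this with the defining relation $P_{n+1}=xP_n-P_n'-xP_n''$ forces, by a degree count, $c_n=0$ and then $d_n=n^2$, and a sign analysis of the constant coefficients $\mathfrak{a}(n,0)$ and $\mathfrak{a}(n,2)$ produces a contradiction. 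A single-pair counterexample against one fixed measure cannot rule out the existence of some other orthogonalizing measure; this is precisely why the paper records your observation separately, as the Remark immediately following the Proposition, rather than as its proof. To close the gap you would need either the Favard argument or the Sheffer--Meixner classification of orthogonal Sheffer sequences invoked later in Theorem \ref{thm:sheffer-othogonal}.

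A secondary point: your closing heuristic is not correct as stated. Orthogonality of a family generated by a raising operator does not require that operator to be self-adjoint in $L^2(\mu)$. In the Gaussian model of the introduction the raising operator is $Lf=xf-f'$ while $L^\star=\frac{d}{dx}$, and the Hermite polynomials are orthogonal precisely \emph{because} the adjoint is the lowering (derivative) operator, not because $L=L^\star$. So the asymmetry $\RR\neq\RR^\star$ recorded in Proposition \ref{prop:adjoint} is not, by itself, evidence against orthogonality; what would matter is that $\RR^\star$ fails to lower degree on $\mathscr{P}$, and making that precise essentially returns you to a Favard-type argument.
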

\begin{proof}
Using Favard Theorem (see \cite{chihara}), if the family $\PP$ would be a class of orthogonal polynomials, then there exist numerical constants $c_n, d_n$ so that for polynomial $P_n \in \PP$, we have
\begin{equation}\label{eq:1}
P_{n+1}(x) = (x -c_n) P_n (x) - d_n P_{n-1}(x).
\end{equation}
On the other hand, by very definition of operator $\RR$, we have also the following relation
\begin{equation}\label{eq:2}
P_{n+1}(x) = x P_n (x) - P'_n (x) - x P''_n (x).
\end{equation}
Hence, $c_n P_n (x) = P'_n (x) + x P''_n (x) -d_n P_{n-1}(x)$. Now, taking into account that $\text{deg}(P_n)=n$ for all $n \in \N$, when $c_n \neq 0$, a degree argument leads to a contradiction. If $c_{n_0}=0$ for some $n_0 \in \N$, then we have necessary $d_{n_0} =n_0^2$, see also Remark \ref{rem:sheffer+favard}. This is because of the fact that all polynomials $P_n$ are monic.  Now, assume that  
$$P_{n-1}(x) = \sum_{k=0}^{n-1} \mathfrak{a}(n-1,k ) x^k, \qquad \mathfrak{a}(n-1,n-1)=1.$$ Then, using definition of polynomial $P_n$ through of the operator $\RR$, one can obtain that
\begin{equation}\label{eq:pn-1pn}
\begin{split}
P_n(x) &= x^n +\mathfrak{a}(n-1,n-2) x^{n-1} \\
& \quad + \sum_{k=1}^{n-2} \Big( \mathfrak{a}(n-1,k-1) - (k+1)^2 \mathfrak{a}(n-1,k+1) \Big) x^k - \mathfrak{a}(n-1,1).
\end{split}
\end{equation}
On the other hand, relation $n^2_0 P_{n_0 -1}(x) =  P'_{n_0} (x) + x P''_{n_0} (x)$ implies that $P_{n_0+1}(x)= x P_{n_0}(x) - n^2_0 P_{n_0-1}$. Hence, 
$$\mathfrak{a}(n_0 +1,0) = 4  \mathfrak{a}(n_0 -1,2)  - \mathfrak{a}(n_0-1,0)=n^2_0  \mathfrak{a}(n_0-1,0),$$ i.e. $4 \mathfrak{a}(n_0-1,2) = (1 +n^2_0) \mathfrak{a}(n_0-1,0)$, which also leads to a contradiction, because one side is positive and the other side negative.
\end{proof}

\begin{rem} { \rm
It is worth noting that, it is easy to see the class $\PP$ is not orthogonal with respect to probability measure induced by random variable $F_\infty$. For example, we have $\E[P_2(F_\infty) \times P_4 (F_\infty)] = 94 \neq 0$. See also item (v) of the forthcoming lemma. 
}
\end{rem}

We close this section with a neat application of the adjoint operator $\RR^\star$. We present the following average version of the well known \textit{Turan's inequality} in the framework of orthogonal polynomials.
\begin{prop}\label{lem:turan}
For every $n \in \N$, the following inequality hold
\begin{equation}\label{eq:turan}
\E[P^2_n(F_\infty)] \ge \E[P_{n-1}(F_\infty) P_{n+1}(F_\infty)].
\end{equation}
\end{prop}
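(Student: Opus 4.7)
The plan is to reduce, via the adjoint operator $\RR^\star$ derived in Proposition~\ref{prop:adjoint}, the difference $\E[P_n^2(F_\infty)] - \E[P_{n-1}(F_\infty) P_{n+1}(F_\infty)]$ to the expectation of $\theta$ times the Wronskian of the consecutive polynomials $(P_{n-1}, P_n)$, and then argue non-negativity of this quantity.

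First, since $P_{n+1} = \RR P_n$, applying the duality $\E[(\RR f)(F_\infty) g(F_\infty)] = \E[f(F_\infty)(\RR^\star g)(F_\infty)]$ with $(f,g) = (P_{n-1}, P_n)$ produces two representations
\begin{equation*}
\E[P_n^2(F_\infty)] = \E[(\RR P_{n-1})(F_\infty)\, P_n(F_\infty)] = \E[P_{n-1}(F_\infty)\, (\RR^\star P_n)(F_\infty)],
\end{equation*}
together with $\E[P_{n-1}(F_\infty) P_{n+1}(F_\infty)] = \E[P_{n-1}(F_\infty)\, (\RR P_n)(F_\infty)]$. Subtracting and inserting the explicit formula $(\RR^\star - \RR)g(x) = \theta(x) g'(x) - x g(x)$ from Proposition~\ref{prop:adjoint} yields
\begin{equation*}
\E[P_n^2] - \E[P_{n-1} P_{n+1}] = \E\bigl[ P_{n-1}(F_\infty)\bigl(\theta(F_\infty) P_n'(F_\infty) - F_\infty P_n(F_\infty)\bigr)\bigr].
\end{equation*}

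Next, the operator $T := \RR^\star - \RR$ is antisymmetric on $L^2(\mu_\infty)$ since $T^\star = (\RR^\star)^\star - \RR^\star = \RR - \RR^\star = -T$. Consequently $\E[P_{n-1} \cdot TP_n] = -\E[TP_{n-1} \cdot P_n]$, and averaging the two equal expressions cancels the $F_\infty P_{n-1} P_n$ contributions, leaving
\begin{equation*}
\E[P_n^2] - \E[P_{n-1} P_{n+1}] = \tfrac{1}{2}\, \E\bigl[\theta(F_\infty)\, W_n(F_\infty)\bigr],
\end{equation*}
where $W_n(x) := P_{n-1}(x) P_n'(x) - P_{n-1}'(x) P_n(x)$ is the Wronskian of the pair $(P_{n-1}, P_n)$.

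Finally, since $\theta(x) = 2|x|\, K_1(|x|)/K_0(|x|) \ge 0$ on $\R$ (both $K_0, K_1$ are positive on $(0,\infty)$, cf.\ item (iii) of Section~2.1), the inequality reduces to pointwise non-negativity of $W_n$. The base cases are manifestly sums of squares: $W_1(x) = 1$, $W_2(x) = x^2 + 1$, $W_3(x) = (x^2+1)^2 + 4$, $W_4(x) = (x^3 - x/2)^2 + (219/4)\,x^2 + 25$. The inductive step would be driven by the identity (derived from the raising formula $P_{n+1} = xP_n - P_n' - xP_n''$)
\begin{equation*}
W_{n+1}(x) = P_n(x)^2 + P_n'(x)^2 - 2 P_n(x) P_n''(x) - x\bigl(P_n(x) P_n'''(x) - P_n'(x) P_n''(x)\bigr).
\end{equation*}
The main obstacle is precisely this last step: a uniform pointwise non-negativity proof for $W_n$ across all $n$, as the higher-derivative cross terms above do not obviously collapse into a sum of squares. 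I would expect the cleanest route to pass through the Sheffer-family structure of $\PP$ (or an integral / generating-function representation), which would allow $W_n$ to be rewritten as a combination of squares in a uniform manner.
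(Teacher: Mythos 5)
Your reduction is correct as far as it goes, and it is genuinely different from (and cleaner than) the one in the paper. Both arguments start from Proposition \ref{prop:adjoint}, writing $\E[P_n^2(F_\infty)] = \E[P_{n-1}(F_\infty)\,\RR^\star P_n(F_\infty)] = \E[P_{n-1}(F_\infty)P_{n+1}(F_\infty)] + \E[P_{n-1}(F_\infty)(\theta(F_\infty)P_n'(F_\infty) - F_\infty P_n(F_\infty))]$. From there the paper passes to the density $\tfrac1\pi K_0(|x|)$, invokes the Gradshteyn--Ryzhik integral $\int_0^\infty x^\mu K_\nu(x)\,dx$, and reduces the claim to a combinatorial inequality between the sums $A(n,k)$ and $B(n,k)$ built from the coefficients $\mathfrak{a}(n,k)$, which it then asserts can be verified by a ``tedious but straightforward'' induction. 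You instead exploit the antisymmetry of $T=\RR^\star-\RR$ --- equivalently, the identity $\E[\theta(F_\infty)h'(F_\infty)] = 2\E[F_\infty h(F_\infty)]$, which does follow from the Gaussian integration by parts already used in the proof of Proposition \ref{prop:adjoint} --- to cancel the $F_\infty P_{n-1}P_n$ contribution and land on the Wronskian formula $\E[P_n^2]-\E[P_{n-1}P_{n+1}] = \tfrac12\E[\theta(F_\infty)W_n(F_\infty)]$ with $\theta\ge 0$. That is a more structured target than the paper's coefficient inequality, and all the identities you use to get there check out.

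However, there is a genuine gap, which you yourself flag: the pointwise non-negativity of $W_n$ for all $n$ is not established. The recursion you derive, $W_{n+1}=P_n^2+(P_n')^2-2P_nP_n''-x(P_nP_n'''-P_n'P_n'')$, does not by itself give a sign --- the terms $-2P_nP_n''$ and $-x(P_nP_n'''-P_n'P_n'')$ have no evident sign, and the identity does not even involve $W_n$, so there is no induction hypothesis to propagate. Exhibiting sum-of-squares decompositions for $n\le 4$ is evidence, not a proof; moreover it is conceivable that only the weaker averaged statement $\E[\theta(F_\infty)W_n(F_\infty)]\ge 0$ holds, in which case a pointwise argument would be the wrong target altogether. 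To be fair, the paper's own proof terminates with a gap of exactly the same nature (its final inequality between $\sum_k A(n,k)$ and $\sum_k B(n,k)$ is asserted rather than proved), so your formulation is at least as complete and arguably a more promising place from which to finish --- for instance via the generating function of Theorem \ref{thm:sheffer}, as you suggest --- but as it stands the proposal does not prove the proposition.
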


\begin{proof}
According to Proposition \ref{prop:adjoint}, we can write
\begin{equation*}
\begin{split}
\E & [P^2_n (F_\infty)]  = \E[P_n(F_\infty) \LL P_{n-1}(F_\infty)]
= \E[ P_{n-1}(F_\infty) \LL^\star P_n (F_\infty)]\\
&= \E[P_{n-1}(F_\infty) \LL P_n (F_\infty)] +\E[P_{n-1}(F_\infty) \theta(F_\infty) P'_n (F_\infty)]  
-\E[F_\infty P_{n-1}(F_\infty) P_n(F_\infty)]\\
&=  \E[P_{n-1}(F_\infty) P_{n+1}(F_\infty)]  +\E[P_{n-1}(F_\infty) \theta(F_\infty) P'_n (F_\infty)]  
 -\E[F_\infty P_{n-1}(F_\infty) P_n(F_\infty)],
\end{split}
\end{equation*}
where the special function $\theta (x) = 2 \vert x \vert \frac{K_1(\vert x \vert )}{K_0 (\vert x \vert)}$. Hence, we are left to show that $$\E[P_{n-1}(F_\infty) \theta(F_\infty) P'_n (F_\infty)]  \ge \E[F_\infty P_{n-1}(F_\infty) P_n(F_\infty)].$$ The later is equivalent to show that 

\begin{equation}\label{eq:inequality}
2 \int_0^\infty x P_{n-1}(x) P'_n(x) K_1 (x) dx \ge \int_0^\infty  x P_{n-1}(x) P_n(x) K_0 (x) dx.
\end{equation}

 We need also the following integral formula is taken from Gradshetyn and Ryzhik \cite[page 676]{integral-formula}
 \begin{equation}\label{eq:integral-formula}
\int_0^\infty x^\mu K_\nu (x) dx = 2^{\mu -1} \Gamma(\frac{1+\mu +\nu}{2}) \Gamma(\frac{1+\mu -\nu}{2}), \quad \Re(\mu+1 \pm \nu) >0.
\end{equation}
Now assume that $P_n (x) = \sum_{k=0}^{n} \mathfrak{a}(n,k) x^k$. Then using integral formula $(\ref{eq:integral-formula})$ together with some straightforward computation, inequality $(\ref{eq:inequality})$ is equivalent with showing that 
\begin{multline*}
2 \sum_{k=1}^{2n-1} \sum_{l=0}^{k} l \mathfrak{a}(n,l) \mathfrak{a}(n-1,k-l)   2^{k-1} \Gamma(\frac{2+k}{2}) \Gamma(\frac{k}{2}) \\
=  2 \sum_{k=1}^{2n-1} \sum_{l=0}^{k} l \mathfrak{a}(n,l) \mathfrak{a}(n-1,k-l) 2^{k-1} k \Big[   \frac{(k-2)!! \sqrt{\pi}}{2^{\frac{k}{2}}}  \Big]^2 \\
\ge \sum_{k=1}^{2n} \sum_{l=0}^{k} \mathfrak{a}(n,l) \mathfrak{a}(n-1,k-l-1)  2^{k-1}\Gamma(\frac{1+k}{2}) \Gamma(\frac{1+k}{2})\\
=  \sum_{k=1}^{2n} \sum_{l=0}^{k} \mathfrak{a}(n,l) \mathfrak{a}(n-1,k-l-1)  2^{k-1} \Big[  \frac{(k-1)!! \sqrt{\pi}}{2^{\frac{k}{2}}}   \Big]^2
 \end{multline*}
 We set 
 \begin{align*}
 A(n,k) = (k!!)^2 \sum_{l=0}^{k} \frac{l}{k} \mathfrak{a}(n,l) \mathfrak{a}(n-1,k-l), \quad 
 B(n,k) = \frac{(k!!)^2}{2}  \sum_{l=0}^{k} \mathfrak{a}(n,l) \mathfrak{a}(n-1,k-l).
 \end{align*}
 Hence, we are left to show that $$\sum_{1 \le k \le 2n-1, \\  k \text{ odd }} A (n,k ) \ge \sum_{1 \le k \le 2n-1, \\ k \text{ odd }} B (n,k ).$$ The last inequality itself, can be shown,  using induction on $n$, together with some straightforward computations but tedious, the recursive relation $(\ref{eq:recursive})$, and the shape of coefficients $\mathfrak{a}(n,k)$ given by relation $(\ref{eq:index-shape})$.
\if
Using integration by parts formulae, $K'_0 (x) = - K_1 (x)$, the asymptotic behaviour  $K_0 (x) \sim \sqrt{\frac{\pi}{2x}} e^{-x}$ as $x \to \infty$, we infer that 

\begin{equation*}
\begin{split}
\E[P_{n-1} & (F_\infty)  \theta(F_\infty) P'_n (F_\infty)]  \\
&= 2 \E [ P_{n-1}(F_\infty) P'_{n}(F_\infty) + F_\infty P'_{n-1}(F_\infty) P'_n (\infty) + F_\infty P_{n-1}(F_\infty) P''_n (F_\infty) ]\\
&= -2 \E[P_{n-1}(F_\infty) \RR P_n (F_\infty)] + 2 \E[F_\infty P_{n-1} (F_\infty)  P_n (F_\infty) ] +2 \E[F_\infty P'_{n-1}(F_\infty) P'_n (\infty) ]\\
&=  -2 \E[P_{n-1}(F_\infty) P_{n+1} (F_\infty)] + 2 \E[F_\infty P_{n-1} (F_\infty)  P_n (F_\infty) ] +2 \E[F_\infty P'_{n-1}(F_\infty) P'_n (\infty) ]
\end{split}
\end{equation*} 
So, we are left to show that $$\E[F_\infty P_{n-1} (F_\infty)  P_n (F_\infty) ] +2 \E[F_\infty P'_{n-1}(F_\infty) P'_n (\infty) ] \ge 0.$$ Now, using item (ii) with $m=n-1$ of Lemma \ref{lem:properties} we obtain that 
$$\E[F_\infty P_{n-1} (F_\infty)  P_n (F_\infty) ] +2 \E[F_\infty P'_{n-1}(F_\infty) P'_n (\infty) ] = \E[P^2_n (F_\infty)] + \E[P_{n-1}(F_\infty) P_{n+1}(F_\infty)].$$ \fi

\end{proof}

\subsection{Generating function and the Sheffer sequences}\label{sec:sheffer}

 In this section, we provide some fundamental elements of theory of Sheffer class polynomials. For a complete overview, the reader may consult the monograph \cite{Roman-book}. Sequences of polynomials play a fundamental role in mathematics. One of the most famous classes of polynomial sequences is the class of Sheffer sequences, which contains many important sequences such as those formed by Bernoulli polynomials, Euler polynomials, Abel polynomials, Hermite polynomials, {\it Laguerre} polynomials, {\em etc.} and contains the classes of associated sequences and Appell sequences as two subclasses. Roman {\em et. al.} in \cite{Roman-book, RomanRota} studied the Sheffer sequences systematically by the theory of modern umbral calculus.\\


Let $\mathbb{K}$ be a field of characteristic zero. Let $\mathcal{F}$ be the set of all formal power series in the variable $t$ over $\mathbb{K}$.
Thus an element of $\mathcal{F}$ has the form
$$f(t)=\sum\limits_{k=0}^\infty a_k t^k,\eqno(1.1)$$
where $a_k \in \mathbb{K}$ for all $k \in \mathbb{N}:=\{0,1,2,\ldots\}$. The order O$(f(t))$ of a power series $f(t)$ is the smallest integer $k$
for which the coefficient of $t^k$ does not vanish. The series $f(t)$ has a multiplicative inverse, denoted by ${f(t)}^{-1}$ or $\frac{1}{f(t)}$,
if and only if O$(f(t))=0$. Then $f(t)$ is called an invertible series. The series $f(t)$ has a compositional inverse, denoted by $\bar{f}(t)$
and satisfying $f(\bar{f}(t))=\bar{f}(f(t))=t$, if and only if  O$(f(t))=1$. Then $f(t)$ is called a delta series. Let $f(t)$ be a delta series and $g(t)$ be an invertible series of the following forms:
$$f(t)=\sum\limits_{n=0}^\infty f_n \frac{t^n}{n!},~~~f_0=0,~f_1\neq0\eqno(1.2a)$$
and
$${g}(t)=\sum\limits_{n=0}^\infty g_n \frac{t^n}{n!},~~~g_0\neq0.\eqno(1.2b)$$

\begin{thm}(Sheffer sequence \cite[Theorem 2.3.1]{Roman-book})
Let $f(t)$ be a delta series and let $g(t)$ be an invertible series. Then there exists a unique sequence $s_n(x)$ of polynomials satisfying the orthogonality conditions
\begin{equation}\label{eq:sheffer-orthogonal}
\langle g(t){f(t)}^k |s_n(x) \rangle=c_n \delta_{n,k},
\end{equation}
for all $n,k \geq 0$, where $\delta_{n,k}$ is the kronecker delta, and $\langle L \, \vert \, p(x) \rangle $ denote the action of a linear functional $L$ on a polynomial $p(x)$. In this case, we say that the sequence $s_n(x)$ in $(\ref{eq:sheffer-orthogonal})$ is the Sheffer sequence for the pair
$(g(t), f(t))$, or that $s_n(x)$ is Sheffer for $(g(t), f(t))$. In particular, the Sheffer sequence for $(1,f(t))$ is called the associated sequence
for $f(t)$ defined by the generating function of the form
$$e^{x\bar{f}(t)}=\sum\limits_{n=0}^\infty \tilde{s}_n(x)\frac{t^n}{n!}\eqno(1.6)$$
and the Sheffer sequence for $(g(t),t)$ is called the Appell sequence for $g(t)$ defined by the generating function of the form
$$\frac{1}{g(t)}e^{xt}=\sum\limits_{n=0}^\infty A_n(x)\frac{t^n}{n!}.\eqno(1.7)$$
\end{thm}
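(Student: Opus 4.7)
The natural approach, following the umbral framework of Rota and Roman \cite{Roman-book, RomanRota}, is to exploit the duality between $\mathcal{F}$ and $\mathbb{K}[x]$ induced by the pairing $\langle t^k \,|\, x^n \rangle = n! \, \delta_{n,k}$, extended bilinearly with formal series permitted on the left. This identifies $\mathcal{F}$ with the full algebraic dual of $\mathbb{K}[x]$: any linear functional $L$ corresponds to the unique series $\sum_{k \geq 0} \langle L \,|\, x^k \rangle \, t^k / k!$, and conversely any $h(t) \in \mathcal{F}$ acts on polynomials by truncating the sum to their degree.

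The central step is to establish that the family $\{g(t) f(t)^k : k \geq 0\}$ is a \emph{pseudobasis} of $\mathcal{F}$, meaning every $h(t) \in \mathcal{F}$ admits a unique formal expansion $h(t) = \sum_{k \geq 0} \alpha_k \, g(t) f(t)^k$ convergent in the $t$-adic topology. Since $f$ is a delta series, $f(t)^k$ has order exactly $k$, so the coefficients $\alpha_k$ can be solved recursively by matching lowest-order terms; hence $\{f(t)^k\}$ is a pseudobasis. Multiplication by the invertible series $g(t)$ is a continuous automorphism of $\mathcal{F}$, and the family $\{g(t) f(t)^k\}$ inherits the pseudobasis property.

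With this in hand, existence and uniqueness of $s_n(x)$ follow by a triangular solvability argument. Write $s_n(x) = \sum_{j=0}^{n} b_j^{(n)} x^j$ and impose $\langle g(t) f(t)^k \,|\, s_n(x) \rangle = c_n \delta_{n,k}$ for $k = 0, 1, \dots, n$. Because $g(t) f(t)^k$ has order exactly $k$, it pairs nontrivially only with monomials $x^j$ of degree $j \geq k$; the resulting $(n+1) \times (n+1)$ coefficient matrix is triangular with nonzero diagonal (with $k$-th entry $g_0 f_1^k \, k!$, where $g_0$ and $f_1$ are the leading coefficients of $g$ and $f$), so the system is uniquely solvable and produces a genuine polynomial of degree $n$ provided $c_n \neq 0$. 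The remaining conditions $k > n$ hold automatically, since $\mathrm{ord}(g(t) f(t)^k) > \deg s_n$ forces the pairing to vanish. Uniqueness as a sequence is then immediate: any two candidates differ by a polynomial orthogonal to every $g(t) f(t)^k$, hence by the pseudobasis property orthogonal to every functional on $\mathbb{K}[x]$, hence zero.

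The main obstacle is the pseudobasis claim for $\{g(t) f(t)^k\}$, since it simultaneously supplies both the triangular structure underlying existence and the non-degeneracy underlying uniqueness. Once it is granted --- and it reduces to the order identity $\mathrm{ord}(f(t)^k) = k$ for a delta series, together with the automorphism property of invertible series --- everything else is formal. As a consistency check, specialising to $g(t) = 1$ recovers the associated-sequence generating function $e^{x \bar f(t)} = \sum_n \tilde s_n(x) \, t^n/n!$ exhibited in the statement, while specialising to $f(t) = t$ recovers the Appell generating function $g(t)^{-1} e^{xt} = \sum_n A_n(x) \, t^n/n!$, both obtained by dualising the respective pseudobases.
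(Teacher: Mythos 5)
The paper does not prove this statement at all: it is quoted verbatim (with attribution) as Theorem 2.3.1 of Roman's \emph{The Umbral Calculus}, so there is no in-paper argument to compare against. Your proof is correct and is essentially the standard one from that reference --- the pseudobasis property of $\{g(t)f(t)^k\}_{k\ge 0}$ via the order identity $\mathrm{ord}(f(t)^k)=k$, followed by triangular solvability for existence and the dual non-degeneracy for uniqueness --- so it faithfully reconstructs the cited source rather than offering a different route.
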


\begin{thm}(\cite[Theorem 2.3.4]{Roman-book})\label{thm:sheffer-gf}
 The sequence $s_n(x)$ in equation $(\ref{eq:sheffer-orthogonal})$ is the Sheffer sequence for the pair $(g(t),f(t))$ if and only if they admit the exponential generating function of the form
\begin{equation}\label{eq:sheffer-cf}
\frac{1}{g(\bar{f}(t))}e^{x(\bar{f}(t))}=\sum\limits_{n=0}^\infty s_n(x)\frac{t^n}{n!},
\end{equation}
 where $\bar{f}(t)$ is the compositional inverse of $f(t)$.
\end{thm}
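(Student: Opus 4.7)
The plan is to establish both directions by exploiting the duality between formal power series and linear functionals in the umbral algebra, assuming the normalization $c_n=n!$ (the convention under which the stated generating function is correct). The central computational tool is the identity $\langle h(t)\mid e^{xy}\rangle = h(y)$, valid for any formal series $h(t)=\sum_{k\ge 0} a_k t^k/k!$: since $\langle h(t)\mid x^n\rangle = a_n$ and $e^{xy}=\sum_{n\ge 0} y^n x^n/n!$, linearity gives $\langle h(t)\mid e^{xy}\rangle = \sum_{n\ge 0} a_n y^n/n! = h(y)$.

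For the forward direction, I would assume that $s_n(x)$ is Sheffer for $(g(t),f(t))$. The orthogonality condition says that $\{g(t)f(t)^k/k!\}_{k\ge 0}$ is the dual basis to $\{s_k(x)\}_{k\ge 0}$, so every polynomial $p(x)$ admits the expansion
\begin{equation*}
p(x)=\sum_{k\ge 0}\frac{\langle g(t)f(t)^k\mid p(x)\rangle}{k!}\,s_k(x).
\end{equation*}
Applying this with $p(x)=x^n$, multiplying by $y^n/n!$, summing over $n$, exchanging the order of summation, and invoking the key identity with $h=g\cdot f^k$, the right-hand side collapses to $g(y)\sum_{k\ge 0}s_k(x) f(y)^k/k!$, producing $e^{xy}=g(y)\sum_{k\ge 0}s_k(x) f(y)^k/k!$. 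Because $f$ is a delta series, $\bar{f}$ exists as a formal power series of order one; substituting $y=\bar{f}(t)$ delivers exactly the claimed generating function.

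For the converse, I would suppose the generating function identity holds and apply the functional $g(t')f(t')^n$ in the $x$-variable to both sides, treating $t$ as a parameter. By the key identity with $h=g\cdot f^n$, the right-hand side evaluates to $g(\bar{f}(t))^{-1}\cdot g(\bar{f}(t)) f(\bar{f}(t))^n = t^n$, while the left-hand side becomes $\sum_{k\ge 0}\langle g(t')f(t')^n\mid s_k(x)\rangle\, t^k/k!$. Matching coefficients of $t^k$ then recovers the Sheffer orthogonality condition $\langle g(t')f(t')^n\mid s_k(x)\rangle = n!\,\delta_{n,k}$.

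The main obstacle is justifying the formal manipulations rigorously. Since $e^{xy}$ is not a polynomial, the action of a functional in $x$ has to be defined coefficient-by-coefficient in $y$, and the exchange of summations requires working in the $t$-adic completion of the umbral algebra. Both concerns are handled by observing that once any fixed power of $t$ (or of $y$) is singled out, only finitely many terms contribute, so every identity reduces to a polynomial identity between the coefficients and can be verified termwise.
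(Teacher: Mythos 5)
The paper states this theorem without proof, citing \cite[Theorem 2.3.4]{Roman-book}; your argument is correct and is essentially the standard umbral-calculus proof given there: expand $x^n$ in the basis $\{s_k\}$ using the dual family $\{g(t)f(t)^k/k!\}$, sum against $e^{xy}$ via the evaluation identity $\langle h(t)\mid e^{xy}\rangle=h(y)$, substitute $y=\bar f(t)$, and for the converse apply $g(t)f(t)^n$ to the generating function and match coefficients. Your explicit observation that the normalization $c_n=n!$ is required for the stated form of the generating function is a useful clarification, since the paper's equation $(\ref{eq:sheffer-orthogonal})$ leaves $c_n$ unspecified.
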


Now, consider the class $\PP$ of polynomials $P_n$. Let $G$ denotes the generating function, i.e.
$$G(t,x):= \sum_{n \ge 0} P_n(x)\frac{t^n}{n!}.$$

\begin{thm}\label{thm:sheffer}
For the random variable $F_\infty = N_1 \times N_2$ consider the associated family of polynomials $\PP$ given by $(\ref{eq:main-poly})$. Then, the family $\PP$ is Sheffer for the pair  $$(f(t),g(t))=\Big( \coth^{-1} (\frac{1}{t}), \frac{1}{\sqrt{1-t^2}}\Big).$$ 
\end{thm}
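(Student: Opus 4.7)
The plan is to use the characterization from Theorem \ref{thm:sheffer-gf}: the sequence $\PP$ is Sheffer for $\bigl(1/\sqrt{1-t^2},\,\tanh^{-1}(t)\bigr)$ if and only if its exponential generating function equals $\frac{1}{g(\bar f(t))}e^{x\bar f(t)}$. Since $\bar f(t)=\tanh(t)$ and $g(\tanh t)=1/\sqrt{1-\tanh^2 t}=\cosh(t)/1=\operatorname{sech}^{-1}(t)$ (so $1/g(\bar f(t))=\operatorname{sech}(t)$), I will reduce everything to the identity
\[
G(t,x)\;=\;\sum_{n\ge 0} P_n(x)\frac{t^n}{n!}\;=\;\frac{e^{x\tanh(t)}}{\cosh(t)}.
\]

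To obtain this identity, the first step is to translate the recursion $P_{n+1}=\RR P_n$, $P_0=\mathbf 1$, into a partial differential equation for $G$. Multiplying $P_{n+1}(x)=xP_n(x)-P_n'(x)-xP_n''(x)$ by $t^n/n!$ and summing in $n$ produces the Cauchy problem
\[
\partial_t G(t,x)\;=\;x\,G(t,x)-\partial_x G(t,x)-x\,\partial_x^2 G(t,x),\qquad G(0,x)=1,
\]
whose formal power-series solution in $t$ with polynomial coefficients in $x$ is unique and, by construction, equals $\sum_n P_n(x)t^n/n!$.

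Next I would solve this PDE with the natural exponential ansatz $G(t,x)=\exp\bigl(xA(t)+B(t)\bigr)$ with $A(0)=B(0)=0$. Substituting into the PDE gives
\[
xA'(t)+B'(t)\;=\;x\bigl(1-A(t)^2\bigr)-A(t),
\]
and matching the coefficients of $x$ and the constant term (in $x$) decouples the system into $A'(t)=1-A(t)^2$ and $B'(t)=-A(t)$. These ODEs solve respectively to $A(t)=\tanh(t)$ and $e^{B(t)}=1/\cosh(t)$, yielding exactly $G(t,x)=e^{x\tanh(t)}/\cosh(t)$. Since $\bar f(t)=\tanh(t)$, i.e.\ $f(t)=\tanh^{-1}(t)=\coth^{-1}(1/t)$, and $1/g(\bar f(t))=\cosh(t)^{-1}\cdot\cosh(t)\cdot\operatorname{sech}(t)=\operatorname{sech}(t)$, equivalently $g(u)=1/\sqrt{1-u^2}$, Theorem \ref{thm:sheffer-gf} delivers the claim.

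The main obstacle is really only the PDE-to-ODE decoupling step: one must confirm that the exponential ansatz is legitimate, which is justified by the fact that the PDE is first-order linear with analytic coefficients and the Cauchy datum is an exponential, so the unique formal solution is necessarily of that form. Uniqueness of the formal solution, inherited from the recursion, then upgrades the formal identity to an identity of exponential generating functions.
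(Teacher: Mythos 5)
Your proof is correct, and its skeleton coincides with the paper's: both derive the evolution equation $G_t = xG - G_x - xG_{xx}$, $G(0,x)=1$, from the recursion $P_{n+1}=\RR P_n$, and both then read off the Sheffer pair from the closed form of $G$ via Theorem \ref{thm:sheffer-gf} (note $e^{x/\coth(t)}=e^{x\tanh(t)}$, so the two closed forms agree). Where you genuinely diverge is in how the PDE is solved: the paper recognizes it as the Feynman--Kac equation for the squared Bessel process with $\delta=2$ and imports the closed-form solution from \cite[Theorem 5.4.2]{Platen-book}, whereas you solve it directly by the ansatz $G=\exp(xA(t)+B(t))$, which decouples into the Riccati equation $A'=1-A^2$ and $B'=-A$, giving $A=\tanh$ and $e^{B}=1/\cosh$. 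Your route is more elementary and self-contained; the paper's buys brevity and a probabilistic interpretation at the cost of an external citation. One small inaccuracy in your justification of the ansatz: the PDE is second order in $x$ (it contains $x\,\partial_x^2$), not ``first-order linear,'' so the claim that the solution is \emph{necessarily} of exponential form does not follow as stated. This does not harm the argument, because you do not need necessity: the recursion determines the formal series $\sum_n P_n(x)t^n/n!$ uniquely, and a direct check shows $e^{x\tanh(t)}/\cosh(t)$ satisfies the same Cauchy problem, which already forces the two to coincide. I would simply replace that sentence with the uniqueness-plus-verification argument you give in your final sentence.
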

\begin{proof}
 Note that the generating function $G$ satisfies in the following second order PDE $$\frac{d}{dx} \left( x \frac{d}{dx} G(t,x) \right) = x G(t,x)- \frac{d}{dt} G(t,x),$$ i.e. 
 \begin{equation}\label{eq:cf-pde}
G_t = x G - G_x - x G_{xx}.
\end{equation}
 Hence, the PDE $(\ref{eq:cf-pde})$ is just the associated PDE, scaled in space, in the Feynman-Kac formula for the squared Bessel process with index $\delta=2$, and therefore \cite[Theorem 5.4.2]{Platen-book} immediately implies that 
 \begin{equation*}
G(t,x)= \frac{e^{\frac{x}{\coth (t)}}}{\cosh (t)}.
\end{equation*}
Therefore $f(t)= \coth^{-1}(\frac{1}{t})$ in the representation $(\ref{eq:sheffer-cf})$, and moreover, $g(t)= \frac{1}{\sqrt{1-t^2}}$ is a direct consequence of the hyper trigonometric identity $\sinh ( \cosh^{-1} (t)) = \sqrt{t^2 -1}$.
\end{proof}

A polynomial set $\PP= \{P_n\}_{n \ge 0}$ (i.e. $\text{deg} (P_n) =n$ for all $n \ge 0$) is said to be {\it quasi--monomial} if there are two operators $\RR$, and $\LL$ independent of $n$ such that 
\begin{equation}\label{eq:quasi-monomial}
\RR (P_n) (x) = P_{n+1}(x), \quad \text{and} \quad \LL(P_n)(x)= P_{n-1}(x).
\end{equation}
In other words, operators $\RR$ and $\LL$ play the similar roles to the multiplicative and derivative operators, respectively, on monomials.  We refer to the $\LL$ and $\RR$ operators as the {\it descending} (or {\it lowering}) and {\it ascending} (or {\it raising}) operators associated with the polynomial set $\PP$. A fundamental result (see \cite[Theorem 2.1]{cheikh1}) tells that every polynomial set is quasi--monomial in the above sense. The next corollary aims to take the advantage of being Sheffer the polynomial set $\PP$  to provide an explicit form of the associated lowering operator $\LL$.

\begin{cor}\label{col:lowering-operator}
For the Sheffer family $\PP$ of polynomials $P_n$ associated to random variable  $F_\infty = N_1 \times N_2$, the lowering operator, i.e. $\LL P_n = n P_{n-1}$ for $n \ge 1$ is given by 

\begin{equation*}
 \LL = f (D), \quad \text{and} \quad  f (t) = \coth^{-1} (\frac{1}{t})= \sum_{k \ge 0} \frac{t^{2k+1}}{2k+1},
\end{equation*}
where $D$ stands for derivative operator.
\end{cor}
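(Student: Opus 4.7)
The plan is to bootstrap off Theorem \ref{thm:sheffer} using a single generating-function calculation. Once $\PP$ is identified as the Sheffer family for the pair $(f(t),g(t))=(\coth^{-1}(1/t),\,1/\sqrt{1-t^2})$, the delta operator $f(D)$ is forced to be the lowering operator by the general umbral-calculus machinery: if $\{s_n\}$ is Sheffer for $(g,f)$, then $f(D)s_n=ns_{n-1}$ for every $n\ge 1$. I will give a short self-contained derivation of this fact rather than just quote it, so that the reader does not need to leave the paper.

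The derivation is a one-line manipulation of the generating function $G(t,x)=\sum_{n\ge 0}P_n(x)\,t^n/n!$. By Theorem \ref{thm:sheffer-gf} and Theorem \ref{thm:sheffer},
\begin{equation*}
G(t,x)\;=\;\frac{1}{g(\bar f(t))}\,e^{x\bar f(t)}.
\end{equation*}
Applying the operator $f(D)$ in the $x$-variable commutes with the scalar prefactor $1/g(\bar f(t))$, and on the exponential it acts by symbol substitution:
\begin{equation*}
f(D)\,e^{x\bar f(t)}\;=\;f(\bar f(t))\,e^{x\bar f(t)}\;=\;t\cdot e^{x\bar f(t)}.
\end{equation*}
Hence $f(D)\,G(t,x)=t\cdot G(t,x)$. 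Comparing the coefficients of $t^n/n!$ on the two sides, using $t\cdot\sum_n P_n(x)\,t^n/n! = \sum_n n P_{n-1}(x)\,t^n/n!$, yields $f(D)\,P_n(x)=nP_{n-1}(x)$ for every $n\ge 1$, so indeed $\LL=f(D)$ is the lowering operator.

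To make the expression explicit I would expand $f(t)=\coth^{-1}(1/t)$ as a power series. Using the elementary identity $\coth^{-1}(1/t)=\tanh^{-1}(t)=\tfrac{1}{2}\log\!\bigl(\tfrac{1+t}{1-t}\bigr)$ (valid for $|t|<1$), the Taylor expansion of $\tanh^{-1}$ gives
\begin{equation*}
f(t)\;=\;\sum_{k\ge 0}\frac{t^{2k+1}}{2k+1},
\end{equation*}
whence $\LL=f(D)=\sum_{k\ge 0}\frac{D^{2k+1}}{2k+1}$. I expect no real obstacle here: the only point worth verifying carefully is that our normalization $P_n=\RR^n\mathbf{1}$ matches the normalization in the Sheffer generating function used in Theorem \ref{thm:sheffer}, but this was already arranged in the previous theorem, so the identification $\LL P_n=nP_{n-1}$ transfers without any rescaling. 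The trigonometric identity $\coth^{-1}(1/t)=\tanh^{-1}(t)$ is standard and removes any apparent singularity at $t=0$, so $f$ is a genuine delta series and the action $f(D)$ is well-defined on polynomials (only finitely many terms contribute when applied to any $P_n$).
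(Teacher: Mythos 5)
Your argument is correct and is essentially the paper's proof: the paper simply cites Roman's Theorem 2.3.7 (the delta operator $f(D)$ of a Sheffer pair $(g,f)$ lowers the sequence), after the pair has been identified in Theorem \ref{thm:sheffer}. You merely unpack that citation with the standard generating-function computation $f(D)G(t,x)=f(\bar f(t))G(t,x)=tG(t,x)$ and the expansion $\coth^{-1}(1/t)=\tanh^{-1}(t)$, both of which are sound.
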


\begin{proof}
This is an application of \cite[Theorem 2.3.7]{Roman-book}.
\end{proof}
The polynomial set $\PP$ appearing in Corollary \ref{col:lowering-operator} is called $\LL$- {\it Appell polynomial set} since $$\LL P_n = n P_{n-1}, \quad \forall \, n \ge 1.$$ This notion generalizes the classical concept of the Appell polynomial set meaning that $\frac{d}{dx} P_n (x) = n P_{n-1}(x)$ for every $ n\ge 1$. Among the classical Appell polynomials set, we recall the monomials set $\{x^n\}_{n \ge 0}$, {\it Hermite polynomials}, the {\it Bernoulli polynomials}, and the {\it Euler polynomials}. For the application of Appell polynomials in noncentral probabilistic limit theorems, we refer the reader to \cite{murad}.

The next corollary provides  more information on the constant coefficients of the even degree polynomials $P_n \in \PP$.
\begin{cor}\label{cor:0coefficients}
 Let $\PP = \{ P_n \, : \, n \ge 1 \}$ be the associated Sheffer family of the random variable $F_\infty=N_1 \times N_2$. Assume that $E_n, n \ge 1$ stands for Euler numbers (see \cite[Section 1.14, page $48$]{Comtet-book}). Then
 $$ \mathfrak{a}(2n,0 ) = E_{2n}, \quad \ n \ge 1.$$
 Hence, the following representation of even Euler numbers is in order, for $n \in \N$ even number,
 \begin{equation}\label{eq:Euler-new?}
 E_n = (-1)^{n/2} \sum_{i_1=1}^{1} i^2_1  \sum_{i_2=1}^{i_1+1} i^2_2   \sum_{i_3=1}^{i_2+1} i^2_3 \cdots  \sum_{i_{n/2}=1}^{i_{n/2-1}+1} i^2_{n/2}.
 \end{equation}
\end{cor}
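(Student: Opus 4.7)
The plan is to read off $\mathfrak{a}(2n,0) = P_{2n}(0)$ directly from the closed-form generating function that Theorem \ref{thm:sheffer} has just supplied, and then to combine this identity with Proposition \ref{lem:coefficient}(iv) to deduce the summation representation $(\ref{eq:Euler-new?})$.

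First, I would unpack the Sheffer data. From Theorem \ref{thm:sheffer} the pair is $(f(t),g(t)) = (\coth^{-1}(1/t),\, 1/\sqrt{1-t^2})$. Solving $\coth^{-1}(1/\bar f(t))=t$ gives the compositional inverse $\bar f(t)=\tanh(t)$, and then
\[
g\bigl(\bar f(t)\bigr) \;=\; \frac{1}{\sqrt{1-\tanh^2(t)}} \;=\; \cosh(t).
\]
Substituting into the general Sheffer formula $(\ref{eq:sheffer-cf})$ (or equivalently into the expression $G(t,x) = e^{x/\coth(t)}/\cosh(t)$ derived inside the proof of Theorem \ref{thm:sheffer}) yields the clean representation
\[
G(t,x) \;=\; \sum_{n\ge 0} P_n(x)\,\frac{t^n}{n!} \;=\; \frac{e^{x \tanh(t)}}{\cosh(t)}.
\]

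Second, I would specialise to $x=0$. Since $P_n(0) = \mathfrak{a}(n,0)$ and $e^{x\tanh(t)}\big|_{x=0}=1$, evaluation of $G$ at $x=0$ gives
\[
\sum_{n\ge 0} \mathfrak{a}(n,0)\,\frac{t^n}{n!} \;=\; \frac{1}{\cosh(t)} \;=\; \operatorname{sech}(t).
\]
But $\operatorname{sech}(t) = \sum_{n\ge 0} E_n\, t^n/n!$ is precisely the exponential generating function of the Euler numbers (see \cite[Section 1.14]{Comtet-book}), with the convention $E_{2n+1}=0$. Matching coefficients of $t^n/n!$ gives $\mathfrak{a}(n,0)=E_n$ for all $n\ge 0$; in particular $\mathfrak{a}(2n,0)=E_{2n}$ as claimed, while the vanishing of $\mathfrak{a}(2n+1,0)$ already established in Proposition \ref{lem:coefficient}(iv) is consistent with $E_{2n+1}=0$.

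Third, the representation $(\ref{eq:Euler-new?})$ follows immediately: for even $n$, Proposition \ref{lem:coefficient}(iv) provides the nested-sum formula
\[
\mathfrak{a}(n,0)=(-1)^{n/2} \sum_{i_1=1}^{1} i_1^2 \sum_{i_2=1}^{i_1+1} i_2^2 \cdots \sum_{i_{n/2}=1}^{i_{n/2-1}+1} i_{n/2}^2,
\]
and the identity $\mathfrak{a}(n,0)=E_n$ just established converts the left side into $E_n$.

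There is essentially no obstacle here; the only thing to verify carefully is the computation of the compositional inverse $\bar f$ and the simplification $g(\tanh t)=\cosh t$, which are standard hyperbolic-function manipulations. The result is really a direct corollary of Theorem \ref{thm:sheffer} once one observes that the nontrivial $x$-dependence drops out at $x=0$, leaving the classical sech generating function.
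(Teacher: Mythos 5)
Your proof is correct and is essentially the paper's argument: both reduce the claim to the expansion $1/\cosh(t)=\sum_{n\ge 0}E_n t^n/n!$ coming from $g(\bar f(t))=\cosh(t)$ in Theorem \ref{thm:sheffer}, and then obtain \eqref{eq:Euler-new?} by combining with Proposition \ref{lem:coefficient}(iv). The only cosmetic difference is that you evaluate the generating function $G(t,x)=e^{x\tanh(t)}/\cosh(t)$ at $x=0$, whereas the paper extracts the constant coefficient via the formula $P_n(x)=\sum_{k=0}^{n}\frac{1}{k!}\bigl\langle g(\bar f(t))^{-1}f(t)^k\,\big\vert\,x^n\bigr\rangle x^k$ from \cite[Theorem 2.3.5]{Roman-book}; these are two phrasings of the same coefficient extraction.
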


\begin{proof}
According to \cite[Theorem 2.3.5]{Roman-book}, $$P_n(x)= \sum_{k=0}^{n} \frac{1}{k!} \big \langle g (\bar{f}(t))^{-1} f(t)^k \big \vert x^n \big \rangle x^k.$$ Hence, the result follows at once by taking into account Theorem \ref{thm:sheffer}, and  Taylor series expansion of Euler numbers $$ \frac{1}{\cosh (t)} = \frac{2e^t}{e^{2t}+1}= \sum_{n \ge 0} E_n \frac{t^n}{n! }.$$
\end{proof}

\subsection{Orthogonal Sheffer polynomial sequences}
There are several charactrizations of the orthogonal Sheffer polynomial sequences. Here we mentioned the one in terms of their generating function originally due to Sheffer (1939) \cite{sheffer-orthogonal}. Consider a Sheffer polynomial sequence $\{s_n(x)\}_{n\ge 0}$ associated to the pair $(g(t),f(t))$, see Theorem \ref{thm:sheffer-gf}, with the generating function 
\begin{equation}\label{eq:gf}
G(t,x) = \frac{1}{g(\bar{f}(t))}e^{x(\bar{f}(t))}=\sum\limits_{n=0}^\infty s_n(x)\frac{t^n}{n!}.
\end{equation}
\begin{thm}\label{thm:sheffer-othogonal}
A Sheffer polynomial sequence $\{s_n(x)\}_{n\ge 0}$ is orthogonal if and only if its generating function $G(t,x)$ is one of the following forms:
\begin{align}
G(t,x)&= \mu (1-bt)^c \exp\{ \frac{d + at  x}{1-bt} \}, \quad abc\mu \neq 0,\\
G(t,x)&= \mu \exp\{ t (b + ax) + ct^2\}, \quad ac\mu \neq 0,\\
G(t,x)&= \mu e^{ct} (1-bt)^{d +ax}, \quad abc\mu \neq 0,\\
g(t,x)&= \mu (1- \frac{t}{c})^{d_1 + \frac{x}{a}} (1- \frac{t}{b})^{d_2 - \frac{x}{a}}, \quad abc\mu \neq 0, b \neq c. 
\end{align}
\end{thm}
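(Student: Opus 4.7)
My plan is to prove the classical Meixner--Sheffer theorem by combining Favard's characterization of orthogonal polynomial sequences with the analytic structure of Sheffer generating functions, and then solving a resulting system of functional equations. The forward direction (orthogonality $\Rightarrow$ one of the four forms) is where the work lies; the converse is a direct verification.

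For the forward direction, I start by writing the Sheffer generating function in the canonical form $G(t,x) = A(t) e^{x u(t)}$, where $u(t) = \bar{f}(t)$ and $A(t) = 1/g(\bar{f}(t))$, with the normalization $u(0)=0$, $u'(0)=1$, $A(0)\neq 0$. Two structural identities follow immediately by differentiation: $G_x = u(t) G$ and $G_t = \bigl[ A'(t)/A(t) + x\, u'(t) \bigr] G$. On the other hand, Favard's theorem (invoked as in the proof of Proposition \ref{prop:orthogonal}) says that orthogonality is equivalent to the existence of sequences $(c_n),(d_n)$ with $s_{n+1}(x) = (x-c_n) s_n(x) - d_n s_{n-1}(x)$ and $d_n \neq 0$ for $n\ge 1$. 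Multiplying this recurrence by $t^n/n!$ and summing, I can rewrite the left-hand side as $G_t$ (after reindexing), which yields an equation of the form
\begin{equation*}
G_t(t,x) = x\, G(t,x) - C(t,G) - D(t,G),
\end{equation*}
where $C$ and $D$ encode the generating series of $(c_n s_n)$ and $(d_n s_{n-1})$ respectively.

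The central step is then to exploit the fact that, because $G(t,x) = A(t) e^{x u(t)}$, the variable $x$ only enters through the exponential. Matching the coefficient of each power of $x$ after substituting $G_t = [A'/A + x u'] G$ into the displayed equation forces the generating series of $c_n$ and $d_n$ to arise from shift/multiplication operations applied to $G$ itself. A careful bookkeeping of these operations shows that $c_n$ must depend affinely on $n$ and $d_n$ must depend quadratically on $n$: this is Meixner's key combinatorial observation, and it is also where the main obstacle lies. Once $c_n = \alpha + \beta n$ and $d_n = \gamma n + \delta n^2$ are established, the functional equation decouples into a pair of ODEs for $A(t)$ and $u(t)$ alone, of the schematic form $u'(t) = R(u(t))$ and $A'(t)/A(t) = S(u(t))$, where $R,S$ are quadratic rationals in $u$ whose coefficients are the four parameters $\alpha,\beta,\gamma,\delta$.

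Integrating these ODEs gives four qualitatively different regimes according to the discriminant and vanishing pattern of $R$: a repeated real root (giving the Hermite-type form with quadratic exponent, the second line of the theorem), two distinct real roots (giving the Meixner-type form of the fourth line), a simple zero (Charlier/Laguerre-type, the third line), and the generic rational case (Meixner--Pollaczek-type, the first line). At each branch, back-substitution into $G = A(t) e^{x u(t)}$ produces exactly one of the four stated closed forms. For the converse direction, I would simply start from each of the four generating functions, read off the coefficients $A(t)$ and $u(t)$, compute $G_t$ and $G_x$ directly, and extract the three-term recurrence to verify $d_n \neq 0$, invoking Favard's theorem to conclude orthogonality; alternatively, I would identify each family with the classical Meixner--Pollaczek, Hermite, Charlier/Laguerre, and Meixner sequences whose orthogonality is standard. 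The hard part throughout is the polynomial-in-$n$ restriction on the recurrence coefficients, since without this the ODE analysis has no hope of closing.
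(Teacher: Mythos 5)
A preliminary remark: the paper offers no proof of this theorem --- it is stated as a classical result and attributed to Sheffer \cite{sheffer-orthogonal} --- so there is no in-paper argument to measure yours against; what you have written is an outline of the standard Meixner--Sheffer classification, and the skeleton is correct. The only substantive issue is that the step you yourself flag as the obstacle (affine $c_n$, quadratic $d_n$) is asserted rather than carried out, and your phrase ``matching the coefficient of each power of $x$'' points at a messier route than necessary. The step closes cleanly as follows: from $G=A(t)e^{xu(t)}$ one has $xG=\frac{1}{u'}\bigl(G_t-\frac{A'}{A}G\bigr)$; substituting the Favard recurrence into $G_t=\sum_{n}s_{n+1}(x)t^n/n!$ and using the linear independence of the $s_m$ to equate coefficients of $s_m$ gives, for every $m\ge 0$,
\begin{equation*}
c_m t^m+\frac{d_{m+1}}{m+1}\,t^{m+1}=m\,\phi(t)\,t^{m-1}-\psi(t)\,t^{m},\qquad \phi:=\frac{1-u'}{u'},\quad \psi:=\frac{A'}{Au'}.
\end{equation*}
Comparing the coefficient of $t^{m+k}$ for $k\ge 2$ (where the left-hand side vanishes) forces $\phi(t)=\phi_1t+\phi_2t^2$ and $\psi(t)=\psi_0+\psi_1t$, while $k=0,1$ give $c_m=m\phi_1-\psi_0$ and $d_{m+1}=(m+1)(m\phi_2-\psi_1)$ --- exactly the linear/quadratic form recorded in Remark \ref{rem:sheffer+favard} --- and simultaneously produce the ODEs $u'(t)\bigl(1+\phi_1t+\phi_2t^2\bigr)=1$ and $A'/A=u'(t)(\psi_0+\psi_1t)$. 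Note these are equations in $t$, not autonomous equations $u'=R(u)$ as you wrote, and your branch labels need repair: the Hermite form (second line) arises from the degenerate case $\phi\equiv0$, a \emph{repeated} root of $1+\phi_1t+\phi_2t^2$ gives $u(t)=at/(1-bt)$ and hence the \emph{first} line, two distinct real roots give the fourth line, and a single simple root ($\phi_2=0\neq\phi_1$) gives the third. With those corrections, and the verification $d_n\neq0$ needed to invoke Favard for the converse, your argument is the standard proof of the theorem.
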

Among the well known Sheffer orthogonal polynomial sequences are {\it Laguerre, Hermite, Charlier, Meixner, Meixner--Pollaczek} and {\it Krawtchouk polynomials}. See the excellent textbooks \cite{ismail,chihara} for definitions and more information. Theorem \ref{thm:sheffer-othogonal}  can be directly performed to give an alternative proof of Proposition \ref{prop:orthogonal}. The reader is also refereed to references \cite{macdonald,cheikh2} for related results on the associated orthogonal polynomials with respect to the probability measure $F_\infty = N_1 \times N_2$ on the real line. 

\begin{cor}
Let $F_\infty = N_1 \times N_2$, where $N_1, N_2 \sim \mathscr{N}(0,1)$ are independent, and the associated polynomials set $\PP$ is given by $(\ref{eq:main-poly})$. Then polynomial family $\PP$ is Sheffer but not orthogonal. 
\end{cor}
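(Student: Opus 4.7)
The plan is to assemble this corollary from two results that have already been established in the paper. Theorem \ref{thm:sheffer} identifies $\PP$ as the Sheffer sequence for the pair $\big(\coth^{-1}(1/t),\,(1-t^2)^{-1/2}\big)$ and, along the way, produces the explicit generating function $G(t,x) = e^{x\tanh t}/\cosh t$; this takes care of the Sheffer half of the statement. For the failure of orthogonality one could invoke Proposition \ref{prop:orthogonal} directly, but since the corollary is placed immediately after the remark that Theorem \ref{thm:sheffer-othogonal} ``can be directly performed to give an alternative proof of Proposition \ref{prop:orthogonal}'', I would take that second route to illustrate the generating-function method.

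Concretely, I would compare $\log G(t,x) = x\tanh t - \log\cosh t$ with $\log G$ for each of the four canonical shapes listed in Theorem \ref{thm:sheffer-othogonal}. A short computation in each of the four cases shows that the coefficient of $x$ in $\log G(t,x)$ is one of
\[
at,\qquad \frac{at}{1-bt},\qquad a\log(1-bt),\qquad \frac{1}{a}\bigl[\log(1-t/c)-\log(1-t/b)\bigr],
\]
and after one differentiation in $t$ applied to the two logarithmic expressions, this coefficient becomes in every case a rational function of $t$ with at most two poles in $\mathbb{C}$. For our $G$, by contrast, the coefficient of $x$ in $\log G$ is $\tanh t$, whose derivative $\mathrm{sech}^2 t$ has simple poles at every $t = i\pi(k+\tfrac{1}{2})$, $k\in\mathbb{Z}$, hence infinitely many poles in the complex plane. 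Matching Taylor coefficients at the origin and appealing to uniqueness of meromorphic continuation, I conclude that $\tanh t$ cannot agree with any of the four rational (or log-rational) candidates, so $G(t,x)$ does not fit any of the four canonical shapes, and $\PP$ fails to be orthogonal.

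The main obstacle here is not conceptual but organisational: one has to correctly normalise and expand $\log G$ in each of the four canonical forms and line up the coefficient of $x$ case by case. Once that bookkeeping is done, the single analytic input, namely that $\tanh t$ is neither rational in $t$ nor the logarithm of a rational function in $t$ (witnessed by the infinitely many complex poles of $\mathrm{sech}^2 t$), kills all four candidates uniformly and delivers the non-orthogonality.
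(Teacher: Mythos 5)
Your proposal is correct. The Sheffer half is handled exactly as in the paper, by citing Theorem \ref{thm:sheffer}. For the non-orthogonality half, however, you take a genuinely different route from the paper's actual proof: the paper establishes non-orthogonality in Proposition \ref{prop:orthogonal} via Favard's theorem, assuming a three-term recurrence $P_{n+1}(x)=(x-c_n)P_n(x)-d_nP_{n-1}(x)$, comparing it with $P_{n+1}=xP_n-P_n'-xP_n''$, and deriving a contradiction from a degree count together with a sign clash in the constant coefficients $\mathfrak{a}(n,0)$. What you do instead is carry out the generating-function comparison that the paper only announces as an ``alternative proof'': you match the coefficient of $x$ in $\log G(t,x)$, namely $\bar f(t)=\tanh t$, against the four canonical shapes of Theorem \ref{thm:sheffer-othogonal} and rule them all out at once because $\tanh t$ has infinitely many poles in $\mathbb{C}$ while each candidate is rational or log-rational with finitely many singularities. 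Your route is cleaner and more uniform (one analytic invariant kills all four cases and needs no information about the coefficients $\mathfrak{a}(n,k)$), but it leans on the full strength of Sheffer's 1939 classification and on the explicit closed form of $G$; the paper's Favard argument is more elementary and self-contained, using only the recursion \eqref{eq:recursive} already derived for the coefficients.

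One small inaccuracy worth fixing: $\mathrm{sech}^2 t = 1-\tanh^2 t$ has \emph{double} poles, not simple poles, at $t=i\pi(k+\tfrac12)$, since these are simple zeros of $\cosh t$. This does not affect your argument --- all that matters is that there are infinitely many poles, versus at most two for each differentiated canonical form --- and in fact you could avoid differentiating altogether by observing directly that $\tanh t$ is a single-valued meromorphic function with infinitely many poles, whereas the candidates in cases (3.5)--(3.6) are rational (finitely many poles) and those in (3.7)--(3.8) involve genuine logarithmic branch points.
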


\begin{rem}\cite{sheffer-orthogonal}\label{rem:sheffer+favard}{ \rm A necessary and sufficient condition for $\{s_n(x)\}_{n \ge 0}$ to be a orthogonal Sheffer family is that the monic recursion coefficients $a_n$ and $b_{n}$ in the three-term recurrence relation $s_{n+1}(x) = (x - a_n) s_n (x) - b_n s_{n-1}(x)$ have the form
$$a_{n+1} =c_1 +c_2 \, n, \quad \text{ and } \quad b_{n+1} =c_3 \, n+c_4 \, n^2, \qquad  c_1,\cdots,c_4 \in \R$$  with $b_{n+1} > 0$, in oder words $a_n$ is at most linear in $n$ and that $b_n$ is at most
quadratic in $n$.

}
\end{rem}

\section{Connection with weak convergence on the second Wiener chaos}

\subsection{Normal approximation with higher even moments}
The aim of this section is to build possibly a bridge between the Sheffer family of polynomials $\PP$ given by $(\ref{eq:main-poly})$ associated to target random variable $F_\infty = N_1 \times N_2$ and the non--central limit theorems on the second Wiener chaos.  We start with a striking result appearing in $2005$ known nowadays as the {\it fourth moment Theorem} due to Nualart \& Peccati  \cite{n-p-4m} stating that for a normalized sequence $\{F_n\}_{n \ge 1}$, meaning that $\E[F^2_n]=1$ for all $ n\ge 1$, in a fixed Wiener chaos of order $p \ge 2$, the weak convergence $F_n$ towards $\mathscr{N}(0,1)$ is equivalent with convergence of the fourth moments $\E[F^4_n] \to 3 (= \E[\mathscr{N}(0,1)^4])$.   In the case of normal approximation on the Wiener chaoses, the authors of \cite{a-m-m-p} introduced a novel family of special polynomials that characterizes the weak convergence of the sequence $\{F_n\}_{n \ge 1}$ towards standard normal distribution.  Assume that $\{F_n\}_{n \ge 1}$ be a sequence  of random elements in the fixed Wiener chaos of order $p \ge 2$ such that $\E[F^2_n]=1$ for all $ n\ge 1$. Following \cite{a-m-m-p} consider the family $\mathscr{Q}_{\ge 0}$ of polynomials defined as follows: for any $k \ge2$, define the monic polynomial $W_k$ as
\begin{equation}\label{Wk}
W_k(x) = (2k-1) \Big( \ x \int_{0}^{x} H_{k}(t)H_{k-2}(t) \ud t - H_{k}(x)H_{k-2}(x)\Big),
\end{equation}
where $H_k$ is the $k$th Hermite polynomial, and 
\begin{multline}\label{eq:polynomial-family}
\mathscr{Q}_{\ge 0} := \Big\{  P \ \Big\vert \ P(x)= \sum_{k=2}^{m} \alpha_k W_k(x); \ m \ge 2, \  \alpha_k \ge 0 \ \,,\ 2\le k \le m \Big\}.
\end{multline}
Then, one of the main findings of \cite{a-m-m-p} is that the polynomial family $\mathscr{Q}_{\ge 0}$ characterizes the normal approximation on the Wiener chaoses in the sense that the following statements are equivalent:
\begin{description}\label{eq:Qnormal}
\item[(I)]$F_n \stackrel{\text{law}}{\longrightarrow} \mathscr{N}(0,1)$.
\item[(II)] $(0 \le ) \, \E[P(F_n)]\to \E[P(\mathscr{N}(0,1))] \, (=0) \text{ {\bf for some }} P \in  \mathscr{Q}_{\ge 0}$.
\end{description}
One of the significant consequences of the aforementioned equivalence is the following generalization of the Nualart--Peccati fourth moment criterion. 

\begin{thm}[{\bf Even moment Theorem} \cite{a-m-m-p}]
Let $\{F_n\}_{n \ge 1}$ be a sequence  of random elements in the fixed Wiener chaos of order $p \ge 2$, and that $\E[F^2_n]=1$ for all $ n\ge 1$.  Let $N \sim \mathscr{N}(0,1)$. Then the following asymptotic statements are equivalent:
\begin{description}\label{eq:Qnormal}
\item[(I)]$F_n \stackrel{\text{law}}{\longrightarrow}N$.
\item[(II)] $m_{2k}(F_n):= \E [ F^{2k}_n] \longrightarrow m_{2k}(N)= (2k-1)!!$.
\end{description}
Furthermore, for some constant $C$, independent of $n$, the following estimate in the total variation probability metric takes place 
$$d_{TV}(F_n,\mathscr{N}(0,1) ) \le_C \sqrt{ \E [ F^{2k}_n] -  (2k-1)!!}.$$
\end{thm}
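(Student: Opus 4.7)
The plan is to reduce the theorem to the polynomial-family characterization of normal approximation on Wiener chaoses recalled just above from \cite{a-m-m-p}: the convergence $F_n \stackrel{\text{law}}{\longrightarrow} N$ is equivalent to $\E[P(F_n)] \to 0$ for some $P \in \mathscr{Q}_{\ge 0}$. The entire task then amounts to exhibiting the scalar quantity $\E[F_n^{2k}] - (2k-1)!!$ as $\E[P(F_n)]$ for a suitable $P$ belonging to $\mathscr{Q}_{\ge 0}$.

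The direction (I) $\Rightarrow$ (II) is standard and completely general. On a fixed Wiener chaos of order $p$, Nelson's hypercontractivity forces all $L^q$-norms of $F_n$ to be controlled by a constant times $\|F_n\|_{L^2} = 1$; hence the family $\{F_n^{2k}\}_{n\ge 1}$ is uniformly integrable, and weak convergence automatically upgrades to convergence of the $2k$-th moment, giving $\E[F_n^{2k}] \to \E[N^{2k}] = (2k-1)!!$.

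The substantive direction is (II) $\Rightarrow$ (I), and it rests on an algebraic identity of the form
\begin{equation*}
x^{2k} - (2k-1)!! \;=\; \sum_{j=2}^{2k} \alpha_{j}^{(k)}\, W_{j}(x), \qquad \alpha_{j}^{(k)} \ge 0,
\end{equation*}
where the $W_j$'s are the monic polynomials defined in $(\ref{Wk})$. I would first expand $x^{2k}$ in the monic Hermite basis through the classical formula $x^{n} = \sum_{r=0}^{\lfloor n/2\rfloor} \binom{n}{2r}(2r-1)!! \, H_{n-2r}(x)$, which already produces non-negative coefficients and which isolates the constant term $(2k-1)!!$. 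Then, using the defining relation $W_j(x) = (2j-1)\bigl( x\int_0^x H_j(t) H_{j-2}(t)\,\ud t - H_j(x)H_{j-2}(x)\bigr)$ together with the Hermite product/linearization formula, I would reorganize the truncated Hermite expansion of $x^{2k} - (2k-1)!!$ into a combination of $W_j$'s, and verify that the resulting coefficients $\alpha_j^{(k)}$ come out non-negative. The main obstacle is precisely this combinatorial step: the $W_j$'s carry an integral term and do not form an orthogonal basis, so preserving positivity when passing from the $\{H_{2j}\}$-basis to the $\{W_{2j}\}$-basis requires a careful inductive/telescoping accounting.

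Once the non-negative decomposition is in place, hypothesis (II) reads $\E[P(F_n)] \to 0 = \E[P(N)]$ for the specific $P := \sum_{j} \alpha_j^{(k)} W_j \in \mathscr{Q}_{\ge 0}$, and the qualitative characterization of \cite{a-m-m-p} yields $F_n \stackrel{\text{law}}{\longrightarrow} N$. For the quantitative total-variation estimate, I would invoke the corresponding quantitative version established in \cite{a-m-m-p}, namely $d_{TV}(F_n, N) \le C\sqrt{\E[P(F_n)]}$ for any fixed $P \in \mathscr{Q}_{\ge 0}$, and combine it with the above representation to conclude $d_{TV}(F_n, N) \le C\sqrt{\E[F_n^{2k}] - (2k-1)!!}$, where the constant depends on $k$ and $p$ but not on $n$.
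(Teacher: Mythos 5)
The paper itself offers no proof of this statement: it is imported verbatim from \cite{a-m-m-p}, and your strategy --- reduce to the characterization of normal approximation by the class $\mathscr{Q}_{\ge 0}$ by exhibiting $\E[F_n^{2k}]-(2k-1)!!$ as $\E[P(F_n)]$ for some $P\in\mathscr{Q}_{\ge 0}$ --- is exactly the route of that reference. Your direction (I) $\Rightarrow$ (II) via hypercontractivity and uniform integrability on a fixed chaos is correct and standard.

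The substantive direction, however, is left with a genuine gap. The entire mathematical content of the reduction is the non-negativity of the coefficients $\alpha_j^{(k)}$ in the decomposition of $x^{2k}-(2k-1)!!$ along the $W_j$'s, and you explicitly defer this (``I would reorganize\dots and verify that the resulting coefficients come out non-negative''), naming it yourself as the main obstacle. Without that verification nothing is proved: if even one coefficient were negative the resulting $P$ would fall outside $\mathscr{Q}_{\ge 0}$ and the characterization could not be invoked. Two further points need repair before the positivity question can even be posed correctly. First, the identity cannot hold exactly as polynomials in the form you wrote: since $\deg W_j = 2j$, the sum must run over $j=2,\dots,k$ (not $2k$), and already for $k=2$ one has $W_2=H_4$ and
\begin{equation*}
x^4-3 \;=\; W_2(x) + 6\,H_2(x),
\end{equation*}
so an $H_2$-term (and in general a constant) must be allowed in the decomposition; these are harmless only because $\E[H_2(F_n)]=\E[F_n^2]-1=0$ under the normalization, but that normalization has to be invoked explicitly. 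Second, the quantitative total-variation bound is simply cited from \cite{a-m-m-p} rather than derived, which is acceptable given that the theorem is itself attributed there, but it means your argument adds nothing beyond the reduction whose key positivity step remains unestablished.
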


\subsection{Convergence towards $N_1 \times N_2$: cumulants criterion}
We start with the following observation that connect the polynomials class $ \PP$ with some non--central limit theorems on the Wiener chaoses when the target distribution $F_\infty = N_1 \times N_2$, equality in distribution. In the sequel $D$ and $L \, (L^{-1})$ stand for the Malliavin derivative operator, the infinitesimal Ornstein-Uhlenbeck generator (the pseudo-inverse of operator $L$) respectively.  Next, we define the {\it iterated Gamma operators} (see the excellent monograph \cite{n-p-book} for a complete overview on the topic as well as the non--explained notations) as follows: for a `'smooth'' random variable $F$ in the sense of Malliavin calculus, define $\Gamma_0(F)=F$, and $\Gamma_r(F)= \langle DF, - D L^{-1}\Gamma_{r-1}(F) \rangle_{\HH}$  for $r \ge 1$, where $\HH$ is the underlying separable Hilbert space. Also, the useful fact $\kappa_r(F) = (r-1)! \E[\Gamma_{r-1}(F)]$ is well known, see \cite[Theorem 8.4.3]{n-p-book} where $\kappa_r (F)$ stands for the $r$th cumulant of the random variable $F$. \\

We continue with the following non--central convergence towards the target distribution $F_\infty = N_1 \times N_2$ in terms of the convergences of finitely many cumulants/moments. In the framework of the second Wiener chaos, it has been first proven in \cite{n-p-2w} using the method of complex analysis. For  a rather general setup using the iterated Gamma operators and the Malliavin integration by parts formulae, see \cite{a-p-p}. Also, for quantitative  Berry--Essen estimates see the recent works \cite{e-t, a-a-p-s-2w}, and \cite{a-g,d-n} for the free counterpart statements.  

\begin{thm}\label{thm:2w-requirments}
Let $\{ F_n \}_{n \ge 1}$ be a centered sequence of random elements in a finite direct sum of Wiener chaoses such that $\E[F^2_n]=1$ for all $n \ge 1$. Then, the following statements are equivalent. 
\begin{description}
\item[(I)] sequence $F_n$ converges in distribution towards $F_\infty \sim  N_1 \times N_2$.
\item[(II)] as $n \to \infty$ the following asymptotic relations hold:
\begin{enumerate}
 \item $\kappa_3(F_n) \to 0$.
 \item $\Delta(F_n):=   \text{Var} \left( \Gamma_2 (F_n) - F_n \right) \to 0$.
\end{enumerate}
\end{description}
Whenever the sequence $\{ F_n \}_{n \ge 1}$ belongs to the second Wiener chaos, the quantity $\Delta(F_n)$ appearing in item $2$ can be replaced with 
\begin{equation}\label{eq:Delta}
\Delta'(F_n)=  \frac{\kappa_6(F_n)}{5!} - 2\frac{ \kappa_4(F_n)}{3!}+ \kappa_2(F_n).
\end{equation}
\end{thm}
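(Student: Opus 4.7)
The plan is to prove the equivalence via the Malliavin--Stein method tailored to the Stein operator $\RR f(x) = xf(x) - f'(x) - xf''(x)$ associated with $F_\infty = N_1 \times N_2$. The cornerstone will be a Malliavin integration-by-parts identity that, for every smooth test function $f$, represents the Stein quantity in the form
\begin{equation*}
\E[\RR f(F_n)] = \E\bigl[f''(F_n)(\Gamma_2(F_n) - F_n)\bigr] + R_n(f),
\end{equation*}
with a remainder $R_n(f)$ controlled by $|\kappa_3(F_n)|$ times a seminorm of $f$. To derive this, one applies $\E[F_n g(F_n)] = \E[\Gamma_1(F_n) g'(F_n)]$ with $g=f$ and $g=f''$, and then iterates the IBP once more on $\E[\Gamma_1(F_n) f'''(F_n)]$ to promote $\Gamma_1 \leadsto \Gamma_2$, the mismatch being precisely $\E[\Gamma_2(F_n)] = \kappa_3(F_n)/2$. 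This algebraic identity is, in my view, the most delicate step in the whole argument.

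For the sufficiency $(II) \Rightarrow (I)$, given a bounded test function $h$ one solves the Stein equation $\RR f = h - \E[h(F_\infty)]$. Existence of a smooth solution with polynomial control on its derivatives is a classical feature of $\RR$, see \cite{g-2normal, a-a-p-s-stein}. Substituting the solution into the identity above and applying Cauchy--Schwarz to the leading term yields
\begin{equation*}
\bigl|\E[h(F_n)] - \E[h(F_\infty)]\bigr| \leq C_h \bigl(\sqrt{\Delta(F_n)} + |\kappa_3(F_n)|\bigr),
\end{equation*}
which tends to $0$ by hypothesis, giving weak convergence. For the necessity $(I) \Rightarrow (II)$, hypercontractivity on a finite direct sum of Wiener chaoses upgrades distributional convergence to convergence of all moments and cumulants, so $\kappa_3(F_n) \to \kappa_3(F_\infty) = 0$ by symmetry of $F_\infty$; for the second condition one invokes the approach of \cite{a-p-p}, where the same IBP identity, read backwards, together with the Stein characterization forcing $\Gamma_2(F_\infty) = F_\infty$ (so $\Delta(F_\infty) = 0$), yields $\Delta(F_n) \to 0$.

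Finally, for the reduction to $\Delta'(F_n)$ in the second Wiener chaos, the spectral form $F_n = \sum_k \lambda_k^{(n)}(N_k^2 - 1)$ gives the explicit formula $\Gamma_2(F_n) = 4 \sum_k (\lambda_k^{(n)})^3 N_k^2$, hence
\begin{equation*}
\Delta(F_n) = \text{Var}\bigl(\Gamma_2(F_n) - F_n\bigr) = 32 \sum_k \bigl(\lambda_k^{(n)}\bigr)^6 - 16 \sum_k \bigl(\lambda_k^{(n)}\bigr)^4 + 2 \sum_k \bigl(\lambda_k^{(n)}\bigr)^2.
\end{equation*}
Substituting the spectral-to-cumulant conversion $\kappa_r(F_n) = 2^{r-1}(r-1)! \sum_k (\lambda_k^{(n)})^r$ for $r = 2, 4, 6$ collapses the right-hand side exactly to $\Delta'(F_n) = \kappa_6(F_n)/5! - 2\kappa_4(F_n)/3! + \kappa_2(F_n)$, so in the 2W case $\Delta(F_n) \to 0$ if and only if $\Delta'(F_n) \to 0$. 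The principal obstacle remains the integration-by-parts identity in the opening paragraph; once that is in hand, the Stein equation machinery and hypercontractive moment control finish the proof in a routine manner.
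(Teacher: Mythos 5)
The paper does not actually prove this theorem: it is quoted from the literature, with the second--Wiener--chaos case attributed to \cite{n-p-2w} (complex--analytic method) and the general finite--sum--of--chaoses version to \cite{a-p-p} (iterated Gamma operators and Malliavin integration by parts). Your sketch therefore cannot be compared with an in--paper proof; what it does is reconstruct, correctly in outline, the Malliavin--Stein route of the cited references. Your central identity is right and is in fact exact: since $\E[F_n]=0$ and $\E[\Gamma_1(F_n)]=\kappa_2(F_n)=1$, two applications of the integration by parts formula give $\E[\RR f(F_n)]=\E\bigl[f''(F_n)\bigl(\Gamma_2(F_n)-F_n\bigr)\bigr]$, and centring $\Gamma_2(F_n)$ produces exactly the remainder $\tfrac{1}{2}\kappa_3(F_n)\,\E[f''(F_n)]$ you describe (this is the same identity the paper itself uses in Remark 4.1(a) for $P_6$). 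Your verification that $\Delta(F_n)=2\sum_k(4\lambda_k^3-\lambda_k)^2=\Delta'(F_n)$ in the second chaos is also correct, and shows the two quantities are actually equal there, not merely interchangeable in the limit.

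Two points deserve flagging. First, the sufficiency direction silently assumes that the Stein equation $\RR f=h-\E[h(F_\infty)]$ admits, for a convergence--determining class of $h$, solutions with uniformly bounded $f''$; since the leading coefficient $-x$ of $f''$ in $\RR$ degenerates at the origin, this is a genuinely delicate regularity statement, and it is precisely what \cite{g-2normal, a-a-p-s-stein, a-a-p-s-2w} supply — citing them is legitimate, but it is not ``classical'' boilerplate. Second, and more substantively, the necessity of $\Delta(F_n)\to 0$ in the general finite--direct--sum setting does not follow from ``reading the IBP identity backwards'': that only yields $\E[f''(F_n)(\Gamma_2(F_n)-F_n)]\to 0$ for each fixed test function, whereas $\mathrm{Var}(\Gamma_2(F_n)-F_n)$ would require testing against $\Gamma_2(F_n)-F_n$ itself, which is not of the form $f''(F_n)$. (Likewise, the Stein characterization forces only $\E[\Gamma_2(F_\infty)-F_\infty\,\vert\,F_\infty]=0$; the pointwise identity $\Gamma_2(F_\infty)=F_\infty$ comes from the explicit spectral representation.) The honest route here is the contraction/cumulant control of \cite{a-p-p}, and your argument is complete only insofar as it defers that direction to the same reference the paper cites for the whole theorem. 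In the pure second--chaos case your argument does close on its own, since there $\Delta=\Delta'$ is a finite linear combination of cumulants which converge by hypercontractivity.
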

 
The following proposition aims to provides a direct link between the Sheffer polynomial class $\PP$ and Theorem \ref{thm:2w-requirments}. The {\it Wasserstein$-2$ distance} between two probability distributions $Q_1,Q_2$ on $(\R,{\mathcal B}(\R) )$ is given by
\begin{align*}
  d_{W_2}( Q_1,Q_2):=\inf_{(X_1,X_2) } \biggl\{  \E\biggl(  (X_1-X_2)^2\biggr)^{1/2}   \biggr\}
\end{align*}
where the supremum is taken over the  random pairs  $(X_1,X_2)$ defined on the same classical probability spaces $(\Omega,{\mathcal F},\P)$
with marginal distributions $Q_1$ and $Q_2$.
\begin{prop}\label{prop:p6}
Let $\{ F_n \}_{n \ge 1}$ be a sequence in the second Wiener chaos such that $\E[F^2_n]=1$ for all $n \ge 1$. Consider polynomial $ P_6(x)= x^6 - 55 \, x^4 +331 \, x^2 - 61 \in \PP$. Then, as $n$ tends to infinity, the following statements are equivalent. 
\begin{description}
\item[(I)] sequence $F_n$ converges in distribution towards $F_\infty \sim N_1 \times N_2$.
\item[(II)] $\E[F^4_n] \to 9$, and $\E[F^6_n] \to 225$.
\item[(III)]$(0 \le ) \, \E[P_6 (F_n)] \to \E[P_6 (F_\infty)] \, (=0)$.

\end{description}
In other words, polynomial $P_6$ captures at the same time the two necessary and sufficient conditions for convergence towards $F_\infty$ appearing in Theorem \ref{thm:2w-requirments}. Furthermore, the following quantitive estimate in Wasserstein$-2$ distance holds: for $n \ge 1$, 
\begin{equation}\label{eq:2w}
d_{W_2}(F_n,F_\infty) \le_C \sqrt{P_6(F_n)} \le_C \sqrt{ \Big(  \E[F^6_n] - 225   \Big)   - 55 \Big(   \E[F^4_n] - 9 \Big)  }.
\end{equation}
\end{prop}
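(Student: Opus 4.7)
The plan is to reduce the three-way equivalence to Theorem \ref{thm:2w-requirments} through a single algebraic identity that expresses $\E[P_6(F_n)]$ as a non-negative combination of the two cumulant-level obstructions $\kappa_3(F_n)$ and $\Delta'(F_n)$ that already control convergence in the second Wiener chaos.

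First I would verify $\E[P_6(F_\infty)]=0$ directly from the moments $\E[F_\infty^{2k}]=((2k-1)!!)^2$ listed in item (vi) of Section 2: namely $225 - 55\cdot 9 + 331 - 61 = 0$. The crucial step is then the identity
\begin{equation}\label{eq:keyid}
\E[P_6(F_n)] \;=\; 120\,\Delta'(F_n) \;+\; 10\,\kappa_3(F_n)^2.
\end{equation}
This is purely algebraic: using the centered moment--cumulant relations $\mu_4 = \kappa_4 + 3\kappa_2^2$ and $\mu_6 = \kappa_6 + 15\kappa_4\kappa_2 + 10\kappa_3^2 + 15\kappa_2^3$ together with the normalization $\kappa_2(F_n)=1$, one rewrites
$$\E[P_6(F_n)] = \mu_6(F_n) - 55\,\mu_4(F_n) + 270 = \kappa_6(F_n) - 40\,\kappa_4(F_n) + 10\,\kappa_3(F_n)^2 + 120,$$
which matches the right-hand side of \eqref{eq:keyid} by the definition of $\Delta'$ in \eqref{eq:Delta}. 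Since $F_n$ lives in the second Wiener chaos, $\Delta'(F_n)\geq 0$ (it coincides with the variance $\Delta(F_n)$ by the last sentence of Theorem \ref{thm:2w-requirments}), and of course $\kappa_3(F_n)^2\geq 0$; hence \eqref{eq:keyid} justifies the annotation $\E[P_6(F_n)]\geq 0$ inside (III).

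The equivalences then unfold in one line. The implication (II) $\Rightarrow$ (III) is immediate by substituting $\mu_2=1$, $\mu_4(F_n)\to 9$, $\mu_6(F_n)\to 225$ into the polynomial expression of $\E[P_6(F_n)]$. For (III) $\Rightarrow$ (I), the non-negativity of both summands in \eqref{eq:keyid} forces $\kappa_3(F_n)\to 0$ and $\Delta'(F_n)\to 0$, so Theorem \ref{thm:2w-requirments} applies. Finally (I) $\Rightarrow$ (II) follows from hypercontractivity in the second chaos, under which weak convergence automatically upgrades to convergence of every moment.

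For the Wasserstein-$2$ estimate, I would invoke the quantitative bound from \cite{a-a-p-s-2w} (or \cite{e-t}), which yields an inequality of the form $d_{W_2}(F_n,F_\infty) \leq C\bigl(\,|\kappa_3(F_n)| + \sqrt{\Delta'(F_n)}\,\bigr)$. Combined with \eqref{eq:keyid}, both summands are dominated by a constant times $\sqrt{\E[P_6(F_n)]}$, giving the first inequality of \eqref{eq:2w}. The second inequality is just the rewriting $\E[P_6(F_n)] = (\E[F_n^6]-225) - 55(\E[F_n^4]-9)$ together with monotonicity of the square root. The main obstacle I anticipate is locating a Wasserstein-$2$ quantitative version of Theorem \ref{thm:2w-requirments} in the exact form above; if the cited estimates appear only in total variation or with additional cumulant terms, one either has to appeal to a Poincar\'e-type argument on the second chaos or derive the bound directly via Malliavin--Stein using the operator $\RR$ of \eqref{eq:main-operator} and its adjoint from Proposition \ref{prop:adjoint}.
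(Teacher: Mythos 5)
Your proposal is correct and follows essentially the same route as the paper: the central identity $\E[P_6(F_n)] = 5!\,\Delta'(F_n) + 10\,\kappa_3^2(F_n)$ obtained from the centered moment--cumulant relations is exactly the paper's decomposition, and the reduction to Theorem \ref{thm:2w-requirments} is identical. The only difference is bookkeeping: the paper closes the cycle via ${\bf (II)}\Rightarrow{\bf (I)}$ and obtains the Wasserstein-$2$ estimate by citing \cite[Proposition 5.1]{a-g}, which supplies precisely the quantitative bound whose availability you flag as your main concern.
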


\begin{proof}
Implication ${\bf (I)} \Rightarrow {\bf (II)}$ is just an application of the continuous mapping theorem. ${\bf (II)} \Leftrightarrow {\bf (III)}$  Using the relation between moments and cumulants of random variables (see \cite[page $259$]{p-t-book}) and straightforward computation, and taking into account that $\E[P_6 (F_\infty)]=0$, it yields that
\begin{equation}\label{eq:p6>0}
\begin{split}
 \E[P_6 (F_n)] &=   \big(  \E[F^6_n] - 225   \big)   - 55 \big(   \E[F^4_n] - 9 \big)\\
 &= 5! \Delta'(F_n) + 10 \kappa^2_3 (F_n) \ge 0\\
 \end{split}
\end{equation}
under the lights of Item $2$ at Theorem \ref{thm:2w-requirments}, and the fact that $F_n$ being in the second Wiener chaos. Finally implication ${\bf (II)} \Rightarrow {\bf (I)}$ together with the estimate $(\ref{eq:2w})$ is borrowed from \cite[Proposition 5.1]{a-g}. 
\end{proof}

\begin{rem}\label{rem:P6P8}{ \rm The following remarks are of independent interests. Let  $F$ be a random element in the second Wiener chaos with $\E[F^2]=1$. (a)  The crucial relations
\begin{equation*}
\begin{split}
(0 \le ) \, \E[P_6(F)]& = 5!  \Delta'(F) + 10 \kappa^2_3 (F)\\
& = 5!   \text{Var} \left( \Gamma_2 (F) - F \right)+ 10 \kappa^2_3 (F).
\end{split}
\end{equation*}
in Proposition \ref{prop:p6} can be deduce from using the Malliavin integration by part formulae instead the relation between moments and cumulants. In fact, using very definition of polynomials in the family $\PP$ thorough the rising operator $\RR$, and also applying twice Malliavin integration by parts formula \cite[Theorem 2.9.1]{n-p-book}, we can write (to follow incoming computation, one has to note that $\E[F]=0$, and $\E[\Gamma_1(F)]=\E[F^2]=1$)

\begin{equation}\label{eq:p6>0}
\begin{split}
\E[P_6(F)] & = \E[ \RR P_5 (F )] = \E[F P_5(F) - P'_5(F) - F P''_5(F)]\\
&=\E \left[P''_5(F) \left( \Gamma_2(F) - \E[\Gamma_2(F)] -F \right) \right] + \E[\Gamma_2(F)] \, \E[P''_5(F)]\\
&= \E \left[P''_5(F) \left( \Gamma_2(F) - \E[\Gamma_2(F)] -F \right) \right] + 10 \kappa^2_3(F).
\end{split}
\end{equation}
As a direct consequence, in order to have $ \text{Var} \left( \Gamma_2 (F) - F \right)$ in the very last right hand side of relation $(\ref{eq:p6>0})$, there must be one more copy of the random variable $\Gamma_2(F) - \E[\Gamma_2(F)] -F$ inside the quantity $P''_5(F)$. Now, note that for $F=I_2(f)$ being in the second Wiener chaos, $$\Gamma_r(F) - \E[\Gamma_r(F)] = I_2 (2^r f \otimes^{(r+1)}_1 f) \quad r \ge 1,$$ in which implies that random variable $\Gamma_2(F) - \E[\Gamma_2(F)] -F $ belongs to the second Wiener chaos. Now, taking into account orthogonality of the Wiener chaoses, in order to compute $\E \left[P''_5(F) \left( \Gamma_2(F) - \E[\Gamma_2(F)] -F \right) \right]$, one needs only to understand the projection of random variable $P''_5(F)$ on the second Wiener chaos. Since, $F$ is a multiple integral, and $P''_5$ is a polynomial, so random variable $P''_5(F)$ is smooth in the sense of Malliavin differentiability. Hence, one can use Stroock's formula \cite[Corollary 2.7.8]{n-p-book} to compute the second projection. We have $$P_5 (x)= x^5 - 30 x^3 + 61 x, \quad \Rightarrow \quad P''_5 (x)= 20 (x^3 - 9 x).$$ Hence, $P''_5(F)=\sum_{p=0}^{3} I_p (g_p)$, where $$g_2 (t_1,t_2)=\frac{1}{2!} \E \left[  6 F( D_{t_1}F) ( D_{t_2}F) + (3F^2 - 9) D^2_{t_1,t_2} F  \right].$$ For example,

\begin{equation*}
\begin{split}
 \E \left[  6 F( D_{t_1}F) ( D_{t_2}F)\right] & =6 \times 2 \times 2 \E \left[ I_2 (f) \times I_1(f(t_1,.)) \times I_1 (f(t_2,.)) \right]\\
 &= 24 \E \left[ I_2 (f) \times I_2 \left(  f(t_1,.) \otimes f(t_2,.) \right) \right]\\  
 & = 24 \times 2 \langle f,  f(t_1,.) \otimes f(t_2,.) \rangle \\
 &= 6 \times 2^3 (f \otimes^{(3)}_1 f) (t_1,t_2).
\end{split}
\end{equation*}
The similar computations can be done for the other term. All together imply that $$g_2 (t_1,t_2)= 5! \left( 2^2 (f \otimes^{(3)}_1 f) (t_1,t_2) - f(t_1,t_2) \right).$$ The later is exactly the kernel of random element $\Gamma_2(F) - \E[\Gamma_2(F)] -F$.\\
(b) Assume that $F_\infty \sim N_1 \times N_2$. Then
\begin{equation*}
\begin{split}
x^8 - 11025 & =x^8 - \E [F^8_\infty] \\
&= P_8(x) + 140 \, P_6(x) + 4214 \,P_4 (x) + 24940 \, P_2 (x).
\end{split}
\end{equation*}
This in turns implies that $$\Big\{  m_8 (F ) - m_8 (F_\infty) \Big\}- 4214 \Big\{  m_4 (F ) - m_4 (F_\infty) \Big\} = \E[P_8 (F)] + 140 \E[P_6 (F)].$$ However, in general, for a random element $F$ in the second Wiener chaos with $\E[F^2]=1$, unlike the quantity $\E[P_6 (F)]$, the expectation $\E[P_8 (F)]$ can take negative values too. For example, assume that $G_\infty$ is an independent copy of $F_\infty$. Consider, for every $t \in [0,1]$, the random element $$F_t:= \sqrt{t} F_\infty + \sqrt{1-t} G_\infty.$$ Note that $F_t$ belongs to the second Wiener chaos, and that $\text{law}(F_t) \neq \text{law}(F_\infty)$ for every $t \in (0,1)$.  Define the auxiliary function $Q_4(t):= \E[ P_8 ( F_t)]$. Using MATLAB, we obtain that $$Q_4(t) =  15120\, t^4 - 30240\, t^3 + 19152\, t^2 - 4032\, t.$$  The graph of the polynomial $Q_4$ is shown in Figure \ref{fig:P8}. We note that $Q_4 (t) \le 0$ for every $t \in [0,1]$, and furthermore $Q_4 (0) = \E[P_8(G_\infty)] = Q_4 (1) = \E[ P_8 (F_\infty)]=0$. As a conclusion, this line of argument cannot be useful to justify the fact that convergences of the fourth and the eighth moments are enough to declare convergence in distribution. See \cite{a-g}, and also the forthcoming proposition. 

\begin{figure}\label{fig:P8}
 \centering 
 \includegraphics[scale=0.5]{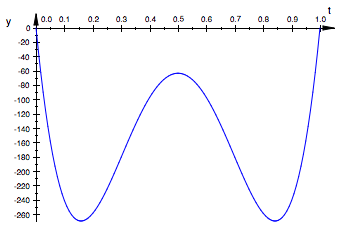}
 \caption{ $Q_4(t)=15120\, t^4 - 30240\, t^3 + 19152\, t^2 - 4032\, t$}
 \label{fig:P8}
\end{figure}
}
\end{rem}

\begin{definition}\label{def:charactrize}
Let  $F_\infty \sim N_1 \times N_2$. We say that a polynomial $P$ (of degree $\ge 3$) characterizes the law of $F_\infty$ whenever $\E[P(F)]=0$ for some random element $F$ inside the second Wiener chaos so that $\E[F^2]=1$ then $F \sim F_\infty$. Also, we say that polynomial $P$ sequentially characterizes the law of $F_\infty$ whenever $\E[P(F_n)] \to 0$ for some normalized random sequence $\{F_n\}_{n\ge 1}$ inside the second Wiener chaos then $F_n \to F_\infty$ in distribution.
\end{definition}


\begin{rem}{ \rm
The aim of the remark is to fairly clarify the role of other polynomials $P_n \in \PP$ in the characterization of the normal product distribution in the sense of Definition \ref{def:charactrize}. We have already shown that when $F$ is a normalized element in the second Wiener chaos and that $\E[P_6 (F)] =0$ then $F$ must distributed as the normal product distribution.  Concerning the polynomial $P_8$ in the characterization of the normal product distribution, consider the random element $F$ in the second Wiener chaos of the form
\begin{equation}\label{eq:F8}
          F =  - \frac 1 {\sqrt{3}}  (N_{-1}^2 -1) + \frac1{ \sqrt{ 12} }  (N_{1}^2 -1) +  \frac 1{\sqrt{ 12} }  (N_{2}^2 -1) =
         \frac{ N_{1}^2  + N_{2}^2 -2 N_{-1}^2     } {\sqrt{ 12} }.          
\end{equation}
We found the random element $F$ by using a random search algorithm. Note that $\E[F^2]=1$, and some straightforward computations yield that $\E[P_4(F)] = \E[P_8(F)]=0$, and furthermore $\text{law}(F) \neq \text{law}(F_\infty)$ where $F_\infty \sim N_1 \times N_2$. For the random variable $F$ in $(\ref{eq:F8})$ we have that   
$$F \stackrel{\text{law}}{=}\frac 1 { \sqrt{3} }\bigl(  G_1 G_2 + G_3 G_4 ) $$ where $(G_1,G_2,G_3,G_4)$ is  Gaussian vector with zero mean and  covariance matrix 
\begin{align*}
(C_{ij})_{1 \le i, j \le 4} = \left(\begin{matrix} 1 & 0& 1&  -1 \\
                      0& 1  &-1 & 1 \\
                      1 &-1&  1&  0 \\
                     -1 & 1 & 0 &  1
      \end{matrix}
\right)
\end{align*}
with the {\it Hankel matrix} form $\E( G_i G_j)=C(i+j)$ 
with $C(2)=C(8)=1, C(3)=C(7)=0, C(4)=C(6)=1,C(5)=-1$,
is the normalized sum of two dependent copies of  $N_1\times N_2$ random variables. One possibility for finding one root $\E[P_{2n}(F)]=0$ other than normal product distribution for higher values of $n$ is to consider polynomials  (see also item (b) of Remark \ref{rem:P6P8})
 $$Q_{n}(t):= \E[ P_{2n}(F_t)]= \E[P_{2n}(\sqrt{t} F_\infty + \sqrt{1-t} G_\infty)], \quad t \in [0,1]$$ where $G_\infty$ is an independent copy of $F_\infty$. It is easy to see that $Q_n$ is a polynomial in $t$ so that $\text{deg}(Q_n) = n$ when $n$ is even, and $\text{deg}(Q_n) = n-1$ when $n$ is odd, and hence $\text{deg}(Q_n)$ is always even.  See Figure \ref{fig:P10} for the graphs of polynomials $Q_n$, for $n=5,6,7$ on the interval $[0,1]$.
 As it can be seen polynomials $Q_5, Q_6$ and $Q_7$ have at least one real root in the open interval $(0,1)$. Moreover, one can show for $n \ge 5$ that polynomials $Q_n$ are of the form 
\begin{equation*}
Q_n (t) = 
\begin{cases}
\sum_{k=1}^{n}  (-1)^k\mathfrak{b}(n,k) \,t^k, & n \, \text{ even },\\
\sum_{k=1}^{n-1} (-1)^{k-1} \mathfrak{b}(n,k) \, t^k, & n \, \text{ odd }
\end{cases}
\end{equation*}
where coefficients $\mathfrak{b}(n,k)$ are all positive (non-zero) real numbers for every $1 \le k \le n ( \text{ or } n-1)$ depending whether $n$ is even or odd. Hence, as a direct consequence the number of the sign changes in $Q_n (x+0) = Q_n (x) = \text{deg}(Q_n) -1$ which is always an odd number. Furthermore  $Q_n(0)=Q_n (1)=0$. This directs one to the possibly use of the {\it Budan--Fourier Theorem} on the positive real roots of polynomials, and we leave it for further investigation later on. Finally, note that since the distribution $F_\infty$ is symmetric, and polynomials $P_k \in \PP$ for $k\ge 3$ being odd contain only odd powers of $x$, so the odd degree polynomials $P_n$ cannot be used in characterization of $F_\infty$ in the above sense.  

\begin{figure}
 \centering 
 \includegraphics[scale=0.5]{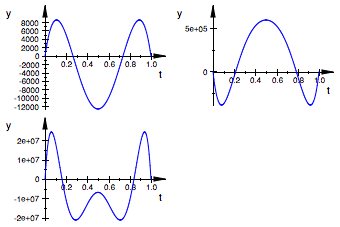}
  \caption{\small{ Polynomials $Q_5, Q_6, Q_7$}}
 \label{fig:P10}
\end{figure}

}
\end{rem}


\section{Appendix (A)}\label{sec:appendixa}

\begin{tiny}
\begin{align*}
P_{0}(x)&=1\\
P_{1}(x)&=x\\
P_2(x)&=x^2 -1\\
P_3(x)&=x^3-5  x\\
P_4(x)&=x^4-14x^2+5\\
P_5(x)&=x^5-30x^3+61 x\\
P_6(x)&=x^6-55x^4+331 x^2-61\\
P_7(x)&=x^7-91x^5+1211 x^3-1385 x\\
P_8(x)&=x^8-140 x^6+3486 x^4-12284 x^2+1385\\
P_9(x)&=x^9-204 x^7+8526 x^5-68060 x^3+50521 x\\
P_{10}(x)&=x^{10}-285x^8+18522 x^6-281210 x^4+663061 x^2-50521\\
P_{11}(x)&=x^{11}-385 x^9+36762x^7-948002 x^5+5162421x^3-2702765x\\
P_{12}(x)&=x^{12}-506x^{10}+67947x^8-2749340 x^6+28862471x^4-49164554 x^2+2702765\\
P_{13}(x)&=x^{13}-650x^{11}+118547 x^9-7097948 x^7+127838711x^5-510964090x^3+199360981x\\
P_{14}(x)&=x^{14}-819 x^{12}+197197x^{10}-16700255x^8+475638163 x^6-3706931865 x^4+4798037791x^2-199360981\\
P_{15}(x)&=x^{15}-1015 x^{13}+315133 x^{11}-36419955 x^9+1544454483 x^7-20829905733 x^5+64108947633 x^3-19391512144x\\
\end{align*}
\end{tiny}

\section{Appendix (B)}\label{sec:appendixb}

\begin{lem}\label{lem:appendixb}
Consider a random variable $X\in L^1(P)$ and  a random vector $Y\in \R^d$
with continuous density $p_Y(u)$. Then
\begin{align} \label{conditional:expectation}
  \E\bigl(  X \big\vert Y= u \bigr)= \frac{\E\bigl(   X \delta_u(Y) \bigr)  }
  { \E\bigl(  \delta_u( Y) \bigr) } \in L^1( \R^d, P_Y )
\end{align}
where $\delta_u(y)=\delta_0( y-u)$,$\delta_0$ is the Dirac delta function, and
$\E\bigl(  \delta_u( Y) \bigr)=p_Y(u)$  is the density of $Y$.
\end{lem}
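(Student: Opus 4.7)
The plan is to interpret the formal expression $\E( X \delta_u(Y))$ as the limit of regularised integrals against a family of mollifiers. Concretely, fix a symmetric smooth mollifier family $\{\phi_\epsilon\}_{\epsilon > 0}$ on $\R^d$ with $\int \phi_\epsilon = 1$ and $\phi_\epsilon \to \delta_0$ in the sense of distributions as $\epsilon \to 0$. For every $\epsilon > 0$ both $\E\bigl(X \phi_\epsilon(Y-u)\bigr)$ and $\E\bigl(\phi_\epsilon(Y-u)\bigr)$ are well-defined finite integrals because $X \in L^1(P)$ and $\phi_\epsilon$ is bounded.

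First I would invoke the disintegration of $P$ relative to $Y$: there exists a regular conditional kernel $y \mapsto Q_y$ on the underlying probability space such that, for every bounded Borel $\psi : \R^d \to \R$,
\begin{equation*}
\E\bigl( X \psi(Y) \bigr) = \int_{\R^d} \psi(y)\, \E(X \mid Y=y)\, p_Y(y)\, dy.
\end{equation*}
Applying this with $\psi(\cdot) = \phi_\epsilon(\cdot - u)$, the numerator becomes the convolution $(\phi_\epsilon \ast F)(u)$, where $F(y) := \E(X \mid Y=y)\, p_Y(y) \in L^1(\R^d, dy)$, while the denominator becomes $(\phi_\epsilon \ast p_Y)(u)$.

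Standard approximate-identity results then yield $(\phi_\epsilon \ast p_Y)(u) \to p_Y(u)$ for every $u$ at which $p_Y$ is continuous (which, by assumption, is every $u$), and $(\phi_\epsilon \ast F)(u) \to F(u) = \E(X \mid Y=u)\, p_Y(u)$ at every Lebesgue point of $F$, hence at $dy$-almost every $u$. Interpreting the formal expressions $\E\bigl(X \delta_u(Y)\bigr)$ and $\E\bigl(\delta_u(Y)\bigr)$ as these two limits, the second convergence identifies $\E\bigl(\delta_u(Y)\bigr) = p_Y(u)$, while dividing the two limits yields exactly $\E(X \mid Y=u)$ at $P_Y$-almost every point where $p_Y(u)>0$. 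The $L^1(\R^d, P_Y)$ integrability of $u \mapsto \E(X \mid Y=u)$ is then an immediate consequence of Jensen's inequality and $\E|X|<\infty$.

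The main subtlety is making the formal symbol $\delta_u(Y)$ rigorous: rather than defining it as a random variable, the statement of the lemma should be read as shorthand for the two convergences just displayed. This interpretation is precisely how $\delta_x(N_1 N_2)$ is used in the proof of Proposition \ref{prop:adjoint}, where the formal manipulation $\int (\cdot)\, \delta_x(zy)\, e^{-z^2/2}\, dz$ followed by the change of variables $z = u/y$ implicitly realises the mollification against the smooth Gaussian weight, so that the only ingredient actually needed for later use is the disintegration identity recorded in \eqref{conditional:expectation}.
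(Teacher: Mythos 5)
Your argument is correct, and it reaches the lemma by a genuinely different route from the paper. The paper does not presuppose the existence of $\E(X\mid Y=u)$: it forms the mollified ratio $\rho_n(u)=p_Y(u)^{-1}\E\bigl(\eta_n(Y-u)X\bigr)$, tests it against bounded continuous functions $g$, justifies the Fubini-type interchange by the distributional convergence of the measures $\P(d\omega)\,\eta_n(Y(\omega)-u)\,du$, and then extracts a weak $L^1(\R^d,P_Y)$ limit of $\rho_n$ which by construction satisfies the defining integral identity \eqref{fubini:dirac}; the formula \eqref{conditional:expectation} is thus read as a \emph{construction} of a version of the conditional expectation, extended to all bounded measurable $g$ by a monotone class argument. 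You instead take the disintegration of $P$ with respect to $Y$ as given, recognise the regularised numerator and denominator as the convolutions $\phi_\epsilon * F$ and $\phi_\epsilon * p_Y$ with $F(y)=\E(X\mid Y=y)\,p_Y(y)\in L^1(dy)$, and conclude by the approximate-identity/Lebesgue-point theorem that the ratio converges pointwise $P_Y$-a.e.\ to $\E(X\mid Y=u)$. Your route buys a stronger mode of convergence (pointwise a.e.\ rather than weak $L^1$) and sidesteps the paper's compactness step, which as written appeals to weak compactness of the unit ball of $L^1$ --- a property $L^1$ does not have; what actually rescues the paper's argument is that $\eta_n * F\to F$ in $L^1$ norm, i.e.\ uniform integrability of $\{\rho_n\}$ in the sense of Dunford--Pettis. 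The price of your approach is that it must invoke the existence of the regular conditional distribution and the Lebesgue differentiation machinery, whereas the paper's softer argument produces the conditional expectation from scratch. Your closing observation, that the lemma is to be read as shorthand for the two regularised limits and that this is exactly how $\delta_x(N_1N_2)$ is manipulated in the proof of Proposition \ref{prop:adjoint}, is the right reading of the statement.
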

\begin{proof} Note first that by definition of the Dirac delta
\begin{align*}
 \E\bigl(  \delta_u( Y) \bigr)= \int_{\R^d} \delta_u( y) p_Y(y) dy = p_Y(u).
\end{align*}

Also,
for  any bounded continuous test function $g: \R^d\to\R$ 
\begin{align}\nonumber\label{fubini:dirac}\E\bigl( X g(Y ) \bigr) &
=\int_{\Omega}\biggl( \int_{\R^d} \delta_0( Y(\omega) -u)     g(u) du\biggr) X(\omega) \P(d\omega) & \\ & 
= \int_{\R^d} \frac{ \E\bigl(   X \delta_0( Y-u ) \bigr) } { p_Y( u ) } g(u) p_Y(u) du.
 \end{align}
Using  Fubini theorem with the generalized function $\delta_0$ is justified as it follows:
for a sequence of mollifiers with compact support $\eta_n\to \delta_0$ (in distribution), for example
\begin{align*}
\eta_n(x)= n^d \prod_{i=1}^d( 1- n |x_i|)^+,
\end{align*}
we have
\begin{align*} &\int_{\R^d} \frac{ \E\bigl(  \eta_n( Y-u ) X\bigr) } { p_Y( u ) } g(u) p_Y(u) du 
= \int_{\Omega} \biggl( \int_{\R^d} \eta_n( Y(\omega) -u)     g(u) du\biggr)  X(\omega)\P(d\omega) & \\ & \longrightarrow 
 \int_{\Omega}\biggl( \int_{\R^d} \delta_0( Y(\omega) -u)     g(u) du\biggr) X(\omega) \P(d\omega)
 &\end{align*}
 since the measure $P(d\omega)\eta_n( Y(\omega)-u)du $ converges  in distribution towards $P(d\omega) \delta_{Y(\omega)}(u) du$.
Note also that
 the sequence of functions
 \begin{align*}
    \rho_n(u)= p_Y(u)^{-1}\E\biggl( \eta_n( Y(\omega) -u) X\biggr)
 \end{align*}
 is bounded in $L^1(\R^d,P_Y)$, and since the unit ball of  $L^1(\R^d,P_Y)$ is weakly compact,
 the sequence $\rho_n(u)$ 
converges weakly in $L^1(\R^d,P_Y)$ towards \eqref{conditional:expectation} which satisfies  \eqref{fubini:dirac}.
 The results extends to all bounded measurable $g$ by the standard monotone class argument.
\end{proof}

\end{document}